\theoremstyle{plain}
\newtheorem{theorem}{Theorem}[section]
\declaretheorem[
  name=Fact,          
  sibling=theorem,      
  refname={Fact,Facts}   
]{fact}
\declaretheorem[name=Lemma,       sibling=theorem, refname={Lemma,Lemmas}]{lemma}
\declaretheorem[name=Proposition, sibling=theorem, refname={Proposition,Propositions}]{proposition}
\declaretheorem[name=Assumption,  sibling=theorem, refname={Assumption,Assumptions}]{assumption}
\declaretheorem[name=Example,     sibling=theorem, refname={Example,Examples}]{example}
\declaretheorem[name=Remark,      sibling=theorem, refname={Remark,Remarks}]{remark}
 \declaretheorem[
  name=Definition,
  sibling=theorem,
  refname={Definition,Definitions},
  style=definition
]{definition}
\newcommand{\forae}{%
  \tikz[baseline={(forall.base)}]{
    \node[inner sep=0pt, outer sep=0pt] (forall) {$\forall$};
    \fill[white] (-0.06em,0.05ex) rectangle (0.06em,0.25ex);
    \draw[line width=0.04em] (-0.06em, 0.7ex) -- (0.06em, 0.7ex);
  }
}
\newcommand{\R}{\mathbb{R}}
\newcommand{\cP}{\mathcal{P}}
\newcommand{\dom}{\mathrm{dom}}
\newcommand{\gph}{\mathrm{gph}}
\newcommand{\N}{\mathbb{N}}
\newcommand{\eR}{\overline\R}
\def\rint{\mathrm{relint}}
\def\p{^}
\newcommand{\co}{\mathrm{co}\hspace*{.3mm}}
\newcommand{\cco}{\overline{\mathrm{co}}\hspace*{.3mm}}
\def\sign{\mathrm{sign}}
\def\sgn{\mathrm{sgn}}
\title{\LARGE Lyapunov stability of the Euler method}
\begin{document}

\author{\large C\'edric Josz\thanks{\url{cj2638@columbia.edu}, IEOR, Columbia University, New York.}}
\date{}

\maketitle

\begin{center}
    \textbf{Abstract}
    \end{center}
    \vspace*{-3mm}
 \begin{adjustwidth}{0.2in}{0.2in}
 ~~~~ We extend the Lyapunov stability criterion to Euler discretizations of differential inclusions. It relies on a pair of Lyapunov functions, one in continuous time and one in discrete time. 
 In the context of optimization, this yields sufficient conditions for the stability of nonisolated local minima when using the Bouligand subgradient method.
 %to converge to a set containing the flat minima.
 %We show in a series of examples that the latter can be constructed using conserved quantities of the gradient dynamics of the objective function.
 %This can be interpreted as implicit regularization by conserved quantities. 
\end{adjustwidth} 
\vspace*{3mm}
\noindent{\bf Keywords:} Differential inclusions, Euler method, Lyapunov stability.
\vspace*{3mm}

\noindent{\bf MSC 2020:} 34A60, 34D20, 37M15.

% 34A60: Differential inclusions
% 34D20: Stability of solutions to ordinary differential equations
% 37M15: Discretization methods and integrators (symplectic, variational, geometric, etc.) for dynamical systems

\tableofcontents

\section{Introduction}
\label{sec:Introduction}

% \begin{enumerate}
%     \item implicit regularization is intrinsically a question of nonsmooth analysis
%     \item f
% \end{enumerate}

Given a set-valued mapping $F:\mathbb{R}^n\rightrightarrows\R^n$, a point $\overline{x}\in \mathbb{R}^n$ is Lyapunov stable \cite{liapounoff1907probleme} if
$$ \forall \epsilon >0, ~ \exists \delta>0: ~~~ 
        x(0) \in B_{\delta}(\overline{x}) ~~~ \Longrightarrow ~~~ %\forall t \in (0,T), ~ 
        x([0,T)) \subseteq B_{\epsilon}(\overline{x}) $$
where $x:[0,T)\rightarrow \mathbb{R}^n$ is any absolutely continuous curve with $T\in(0,\infty]$ such that 
$$ \forae t\in(0,T),\quad    x'(t) \in F(x(t)). $$
Above, $\forae$ means for almost every and $B_r(\overline{x})$ is the open ball of radius $r$ centered at $\overline{x}$ with respect to the Euclidean norm $|\cdot|=\sqrt{\langle \cdot,\cdot\rangle}$. Following \cite{josz2023lyapunov}, we adapt this notion to the Euler discretization by requiring that
\begin{equation*}
        \forall \epsilon >0, ~ \exists \delta,\overline{\alpha}>0: ~ \forall \{\alpha_k\}_{k\in \N} \subseteq (0,\overline{\alpha}], ~~~ 
        x_0 \in B_{\delta}(\overline{x}) ~~~ \Longrightarrow ~~~ %\forall k \in \N, ~ x_k \in 
        \{x_k\}_{k\in \N} \subseteq B_{\epsilon}(\overline{x})
\end{equation*}
where $\{x_k\}_{k\in \N}$ is any sequence in $\mathbb{R}^n$ such that 
\begin{equation}
\label{eq:euler}
    \forall k \in \N, ~~~ x_{k+1} \in x_k + \alpha_k F(x_k).
\end{equation}
Such points will be referred to as d-stable. 

The order of the quantifiers is important. 
% While the permutation 
% $\exists \overline{\alpha}>0: ~ \forall \{\alpha_k\}_{k\in \N} \subseteq (0,\overline{\alpha}],~ \forall \epsilon >0, ~ \exists \delta: \cdots$ would nicely align with the existing notion of Lyapunov stability for difference equations\footnote{One can define stability for difference inclusions, i.e., $x_{k+1}\in G(x_k)$ for some $G:\R\p n \rightrightarrows \R\p n$, as in \cite[Definition 2]{kellett2002robustness}, but it fails to hold at any point where $G$ is discontinuous.} \cite[Definition 4.1.1]{lakshmikantham2002theory}, which has been successfully applied to discretizations of differential equations , it is ill-suited for our purposes. 
For example, with $F:\R\rightrightarrows \R$ defined by $F(x) = -\sign(x)$ where
\begin{equation*}
    \sign(x)=\left\{\begin{array}{cl}
        x/|x| & \text{if}~ x\neq 0, \\
        \left[-1,1\right] & \text{if}~ x=0,
    \end{array}\right.
\end{equation*}
the origin is d-stable. But fixing a sequence $\alpha_0,\alpha_1,\hdots$, however small, then requiring arbitrary $\epsilon$ accuracy for some $\delta$ initialization, is impossible. %In other words, the nonautonomous difference inclusion $x_{k+1} \in G(k,x_k)$, where $G:\N\times \R\rightrightarrows \R$ is defined by $G(k,x) = x+\alpha_k F(x)$, is not stable at the origin.
The notion of d-stability is hence not captured by existing definitions regarding difference inclusions, in particular the rich framework of Krasovskii-Lyapunov stability, known as $\mathcal{KL}$-stability \cite[Definition 2.1]{kellett2005robustness}. Accordingly, there are no prior works on the Lyapunov stability of the Euler method for differential inclusions, to the best of our knowledge. %In the special case where $F$ is minus the Clarke subdifferential $-\overline \partial f$, \cite[Theorem 2]{josz2023lyapunov} proves that strict local minima of locally Lipschitz semi-algebraic functions $f:\R\p n \to \R$ are d-stable if one restricts the definition to constant step sizes. We seek 

In its most general form, the Lyapunov stability criterion can be conveniently stated using set-valued conservative fields \cite[Lemma 2]{bolte2020conservative}. %\cite[Footnote 1]{rios2022examples}
Given a locally Lipschitz function $f:\R^n\to \R$, a set-valued map $D:\R^n\rightrightarrows\R^n$ is a conservative field for $f$, called potential, if $D$ has a closed graph with nonempty compact values and 
$$  \forae t\in (0,1), ~ \forall v \in D(x(t)),~~~ (f\circ x)'(t)=\langle v, x'(t) \rangle $$
for any absolutely continuous curve $x:[0,1]\to\R^n$. A scalar $\ell$ is a $D$-critical value of $f$ if there exists $x\in \R^n$ such that $\ell=f(x)$ and $0\in D(x)$. 

\begin{theorem}[{Lyapunov stability criterion}]
\label{thm:lyapunov}
 Let $\overline{x}\in\R^n$ and $F:\R^n\rightrightarrows\R^n$. Suppose:
    \begin{enumerate}[label=\rm{(\rm{\roman*})}]
        \item \label{item:negative} $f:\R^n\to\R$ is a potential of a conservative field $D:\R^n\rightrightarrows \R^n$ such that 
        $$\forall x\in\R^n,~\sup_{u\in F(x)}\min_{v\in D(x)}\langle u,v\rangle \leq 0;$$
        \item \label{item:strict} $\overline x$ is a strict local minimum of $f$.
    \end{enumerate}
Then $\overline x$ is stable. 
\end{theorem}

The crux of the proof, included in the Appendix, is that $f\circ x$ is decreasing for any trajectory $x:[0,T)\to\R\p n$ of $F$: 
$$\forall s,t \in [0,T),~~~ s\leq t ~~~ \Longrightarrow ~~~  f(x(t)) \geq f(x(s)).$$
We will refer to any function $f:\R^n\to\R$ satisfying this property as a Lyapunov function. Accordingly, we say that $g:\R^n\to\eR$ where $\eR=\R\cup\{\infty\}$ is a d-Lyapunov function on $U \subseteq \R^n$ if
\begin{equation*}
        \exists \overline{\alpha}>0: ~ \forall \{\alpha_k\}_{k\in \N} \subseteq (0,\overline{\alpha}], ~ \forall k \in \N, ~~~ x_k  \in U  ~\Longrightarrow~
        g(x_{k+1})\leq  g(x_k)
\end{equation*}
where $\{x_k\}_{k\in \N}$ is any sequence satisfying \eqref{eq:euler}. 

Let $\cP$ be the set of continuous functions $\kappa:\R\p n\to \R$ that are positive definite, i.e., such that $\kappa(0)=0$ and $\kappa(x)>0$ for all $x\in \R\p n\setminus\{0\}$. Suppose that, in addition to \ref{item:strict}, $F:\R\p n\rightrightarrows\R\p n$ is upper semicontinuous with nonempty compact convex values and \ref{item:negative} is strengthened to:
\begin{enumerate}[label=\rm{(\rm{\roman*}$'$)},start=1]
        \item \label{item:negative_definite} $f:\R^n\to\R$ is a potential of a conservative field $D:\R^n\rightrightarrows \R^n$ such that 
        $$\forall \widehat{x}\in \R\p n,~ \exists (\rho,\kappa)\in (0,\infty)\times \cP,~ \forall x\in B_{\rho}(\widehat{x}),~\sup_{u\in  F(x)} \min_{v\in D(x)} \langle u,v\rangle \leq -\min_{w\in D(x)}\kappa(w)$$
        and $f(\overline x)$ is an isolated $D$-critical value of $f$.
\end{enumerate}
Then $\overline x$ is asymptotically stable. This means that it is stable and all maximal trajectories are globally defined and converge to $\overline x$ when initialized nearby \cite[Definition 2.1]{clarke1998asymptotic}. %Also, $\overline{x}$ is d-stable by \cite[Theorem 2]{josz2023lyapunov}.

In this paper, we seek to extend the Lyapunov criterion to d-stability. This is motivated by the desire to better understand the training dynamics of deep neural networks. The algorithms used can often be viewed as discretizations of dynamical systems. They are believed to stabilize near flat minima of the training loss \cite{hochreiter1997flat,keskar2017large,wu2018sgd}, which may have good generalization properties \cite{hochreiter1997flat,andriushchenko2023modern}. Stability has been of recent interest for these reasons \cite{josz2023globalstability,xiao2023sgd,bolte2025inexact}. 
%While we are not aware of any prior results on d-stability, assumptions \ref{item:negative_definite}-\ref{item:strict} in the Lyapunov stability criterion imply that $\overline{x}$ is d-stable. This is a generalization of \cite[Theorem 2]{josz2023lyapunov}.
Our first main result is as follows. 

\begin{theorem}
\label{thm:stable_point}
Let $\overline x \in \R^n$. Suppose:
    \begin{enumerate}[label=\rm{(\rm{\roman*})}]
        \item $F:\R^n\rightrightarrows \R^n$ 
        is upper semicontinuous with nonempty compact values;
        \item $f:\R^n\to\R$ is a potential of a conservative field $D:\R^n\rightrightarrows\R^n$ such that 
        $$\forall \widehat{x}\in \R\p n,~ \exists (\rho,\kappa)\in (0,\infty)\times \cP,~ \forall x\in B_{\rho}(\widehat{x}),~\max_{u\in \co F(x)} \min_{v\in D(x)} \langle u,v\rangle \leq -\min_{w\in D(x)}\kappa(w)$$
        %$$\forall \widehat{x}\in \R\p n,~ \exists \rho,\kappa>0,~ \forall x\in B_{\rho}(\widehat{x}),~\forall u\in \co F(x),~\exists v\in D(x):~ \langle u,v\rangle \leq -\kappa|v|^2$$ 
         and $f(\overline x)$ is an isolated $D$-critical value of $f$;
        \item $g:\R^n\to \eR$ is continuous and d-Lyapunov near $\overline x$;
        \item $\overline x$ is a strict local minimum of $f+g$.
    \end{enumerate}
Then $\overline x$ is d-stable.
\end{theorem}

While we are not aware of any prior results on d-stability, \cite[Theorem 2]{josz2023lyapunov} proves that strict local minima of locally Lipschitz semi-algebraic functions $f:\R\p n \to \R$ are d-stable if one restricts the definition to constant step sizes and $F$ is the Clarke subdifferential $-\overline \partial f$. \cref{thm:stable_point} improves on this result in several ways:
\begin{enumerate}[itemsep=1mm] 
    \item it allows for variable step sizes; \label{step_size}
    \item it drops the semi-algebraic requirement using a new argument that no longer relies on the Kurdyka-\L{}ojasiewicz inequality \cite{kurdyka1998gradients}; \label{semi-algebraic}
    \item it applies to set-valued mappings $F$ that need not be conservative fields; \label{not_conservative}
    \item it relaxes the requirement that $\overline{x}$ be a strict local minimum of $f$ to being a strict local minimum of $f+g$ for some d-Lyapunov function $g$. \label{d-Lyapunov}
\end{enumerate}
Items \ref{step_size} unlocks the possibility of extending \cref{thm:stable_point} to asymptotic d-stability, as we will do. Item \ref{semi-algebraic} shows that semi-algebraic geometry plays no role in the theory of d-stability, contrary to what our previous work suggests.
%, and what continuous-time dynamics suggests. Local minima of locally semi-algebraic functions are stable, but this is no longer true without the semi-algebraic assumption \cite{absil2006stable}. 
Item \ref{not_conservative} enables the application to normalized gradient descent, particularly suitable for smooth Lyapunov functions $f$, and studied in the sequel \cite{josz2026implicit}. Item \ref{d-Lyapunov} allows one to prove d-stability of nonstrict local minima of $f$. This is a key novelty of this work, which calls for an entirely new proof strategy.

The proof of \cref{thm:stable_point} harnesses the synergy between the Lyapunov function $f$ and the d-Lyapunov $g$.
%Rather, d-stability is established by harnessing the synergy between a Lyapunov function $f$ and a d-Lyapunov function $g$. 
They play complementary roles: $f(x_k)$ remains bounded while $g(x_k)$ decreases, so that $f(x_k)+g(x_k)$ is bounded, despite not being monotone. Since $\overline{x}$ is a strict local minimum of $f+g$, the sequence $x_k$ is itself bounded.
In fact, it suffices if $g(x_k)$ decreases when it is bounded away from $g(\overline{x})$, from above. This allows one to relax the d-Lyapunov requirement near $\overline{x}$, paving the way for future extensions. 

In order to obtain convergence, it is natural to ask for a stronger decrease property 
$$g(x_{k+1})-g(x_k)\leq -\omega \alpha_k\p p$$
for some $\omega>0$ and $p\in[1,\infty)$, when $g(x_k)$ does decrease. If $$\sum_{k=0}\p \infty \alpha_k\p p = \infty ~~~\text{and}~~~ \alpha_k\to 0,$$ then one can show that $f(x_k)\to f(\overline{x})$ and $g(x_k)\to g(\overline{x})$, despite neither being monotone, yielding $x_k\to\overline{x}$. The absence of monotonicity is a key challenge in the proof, with $f$ and $g$ mutually helping each other. The boundedness of $g(x_k)$ and the nonsummability of $\alpha_k$ yield $f(x_k)\to f(\overline{x})$. The convergence of $f(x_k)$ to $f(\overline{x})$ 
%$f(x_k)\to f(\overline{x})$ 
and the nonsummability of $\alpha_k\p p$ in turn imply that $g(x_k)\to g(\overline{x})$. 

Caution though: without nonsummability of $\alpha_k\p p$, the iterates $x_k$ could converge prematurely to a point $\widetilde x$ with $g(\widetilde x)>g(\overline{x})$. The idea of using sufficiently slowly diminishing step sizes to converge to d-stable points is a key contribution of this work. A prime choice is of course
$$\forall k \in \N, ~~~ \alpha_k = \frac{\beta}{(k+1)\p {1/\gamma}},$$
with $\gamma\in [p,\infty)$ for some sufficiently small $\beta>0$. This induces a trade-off in practice. Low values of $\gamma$ favor reduction of $f(x_k)$, while high values of $\gamma$ favor reduction of $g(x_k)$. Taking `$\gamma=\infty$' amounts to using constant step size, ideal for $g(x_k)$, but now $f(x_k)$ is merely bounded. This is a fine choice if one is only interested in stability.
%between progress on the reduction of $f(x_k)$ (high values of $r$) and the reduction of $g$ (low values of $r$).
%It also sheds light on the concept of implicit regularization
%essential and new contribution of this manuscript.
%obtain asymptotic d-stability.

Our study is orthogonal to existing theory on the asymptotic stability of differential inclusions. The converse Lyapunov theorem of Clarke, Ledyaev, and Stern \cite[Theorem 1.2]{clarke1998asymptotic}, which applies to upper semicontinuous mappings $F:\R\p n \rightrightarrows \R\p n$ with nonempty convex compact values, shows that asymptotic stability is equivalent to the existence of a smooth strong Lyapunov function \cite[Definiton 1.1]{clarke1998asymptotic}. The proof of necessity shows that asymptotic stability is robust to certain perturbations on $F$ \cite[Proposition 3.1]{clarke1998asymptotic}, then builds a smooth strong Lyapunov function for the perturbed system, which is a fortiori a smooth strong Lyapunov function the original system.

While the Euler method \cite{dontchev1989error,donchev1998stability,sandberg2009convergence} can be viewed as a solution to a perturbed differential inclusion \cite[Equation (13)]{dontchev1992difference} when $F$ is Lipschitz continuous \cite[Definition 9.26]{rockafellar2009variational}, it does not necessarily satisfy the perturbed system of Clarke \textit{et al}, and certainly not when $F$ fails to be Lipschitz continuous. For example, with $F(x) = -\sign(x)\sqrt{|x|}$, the stability of the origin is not preserved upon discretizing with constant step size.
%the origin is asymptotically stable for $\dot{x}\in F(x)$ but not stable for any constant step size, the Euler method is not stable.
The situation is worse still when $F$ is not continuous \cite[Definition 5.4]{rockafellar2009variational}, where stability is not preserved for any choice of variable step sizes (as the first example $F(x)=-\sign(x)$ demonstrates).

Clarke \textit{et al.}'s breakthrough \cite{clarke1998asymptotic}, which generalizes the converse Lyapunov theorems of Massera \cite{massera1949liapounoff} and Kurzweil \cite{kurzweil1955reversibility} for differential equations, was later extended to $\mathcal{KL}$-stability \cite[Theorem 1]{teel2000smooth} and instability \cite[Theorem 3.15]{braun2021stability} \cite{braun2018complete}. A powerful result on the asymptotic stability of the Euler method for differential equations, the Kloeden-Lorentz theorem \cite{kloeden1986stable,han2017attractors}, relies on the converse Lyapunov theorem of Yoshizawa \cite{yoshizawa1966stability}. However, there seem to be no results dealing with mere stability of the Euler method, as opposed to asymptotic stability, even for differential equations \cite{lakshmikantham2002theory,grune2016nonlinear}.

% asymptotic stability of the Euler method for differential equations. Motivated by numerical integration, it aims to show that asymptotic stability is robust under discretization (see, for e.g., the Kloeden-Lorentz theorem \cite{kloeden1986stable} and related references \cite{han2017attractors,garay2005attractors,kloeden2020lyapunov}) with converse Lyapunov theorems being a formidable tool \cite{lin1996smooth}. In contrast, in \cref{eg:imp}, the stable points are characterized by $xy=1$, but only $\pm (1,1)$ are d-stable. The theory of $\mathcal{KL}$-stability for differential inclusions also seeks to show robustness under perturbations \cite[Theorem 1]{teel2000smooth}. Bear in mind that the Euler method can be interpreted as a solution to a perturbed differential inclusion, to which one can thus apply the Gronwall-Filippov-Wa$\dot{\mathrm{z}}$ewski theorem \cite{filippov1967classical} .
% \cite{donchev1998stability}

The paper is organized as follows. \cref{sec:background} contains background material on differential inclusions. \cref{sec:Theory} develops the theory of d-stability. Necessary and sufficient conditions for being a d-Lyapunov function are discussed in \cref{subsec:Lyapunov functions}. \cref{subsec:Stable points} contains the proof of \cref{thm:stable_point}, which serves as a template for ensuing results. \cref{subsec:Stable points} extends the notion of d-stability from points to sets, and to asymptotic d-stability. \cref{subsec:attractors} studies a subclass of d-stable sets, called attractors. \cref{sec:Application} applies theory of d-stability to the Bouligand subdifferential.

\section{Background}
\label{sec:background}

We first introduce some notations. As usual, $\N=\{0,1,2,\hdots\}$, $\N\p * = \N\setminus \{0\}$, $\R\p * = \R\setminus\{0\}$, and $\R_+=[0,\infty)$. Given two integers $a \leq b$, let $\llbracket a,b \rrbracket = \{a,a+1,\hdots,b\}$. The ceiling $\lceil t \rceil$ of a real number $t$ is the unique integer such that $\lceil t \rceil-1 < t \leq \lceil t \rceil$. The symbol $\land$ means `and', and $\lor$ means `or'. Given a set $A \subseteq \R\p n$, $A\p c = \R\p n \setminus A$ denotes the complement of $A$. If $L:U\to V$ is a linear map between two finite dimensional inner product spaces, then $L\p *:V\to U$ denotes the adjoint. Let $\|\cdot\|_F$ and $\|\cdot\|_1$ respectively denote the Frobenius norm and entrywise $\ell_1$-norm. Also, $\langle \cdot,\cdot\rangle_F$ denotes the Frobenius norm.
Let $\overline{(\cdot)}$, $\mathrm{int}$, $\mathrm{relint}$, and $\mathrm{co}$ respectively denote the closure, interior, relative interior, and convex hull.
Given a point $x \in \mathbb{R}^n$, we use 
\begin{equation*}
    d(x,X) = \inf \{|x-y|: y \in X\}~~~\text{and}~~~
    P_X(x) = \mathrm{argmin} \{ |x-y| : y \in X\}
\end{equation*}
to denote the distance to and the projection on a subset $X\subseteq \mathbb{R}^n$, respectively. 
%Let $\overline{B}_r(x)$ the closed Euclidean ball of center $x\in \R\p n$ and radius $r\geq 0$. 
Given $X\subseteq \R\p n$ and $r\geq 0$, we extend the notion of ball as follows:
$$B_r(X) = \{ x\in \R\p n : d(x,X)< r\}.$$
%Also, $\overline{B}_r(x)$ denotes the closed ball of center $x\in \R\p n$ and radius $r$. 
Given $f:\R^n\to\eR$ and $\ell \in {\mathbb{R}}$, let 
$$[f = \ell] = \{ x \in \mathbb{R}^n : f(x) = \ell \}$$
and define $[f \leq \ell]$ similarly. In particular, $\arg\min f = [f=\min f]$. The domain and the graph of $f$ are defined by 
$$\dom f = \{ x \in \R\p n : f(x)<\infty\} ~~~\text{and}~~~\gph f = \{ (x,\ell) \in \R\p {n+1} : f(x)=\ell\}.$$
For a set-valued mapping $F:\mathbb{R}^n \rightrightarrows \mathbb{R}^m$, the domain and the graph are respectively defined by
$$
      \dom F=\{x\in \R^n:~F(x)\neq \emptyset \} ~~~\text{and}~~~
      \gph F = \{ (x,y)\in \mathbb{R}^n\times \R^m : F(x)\ni y \}.
$$
The image of $X\subseteq \R\p n$ and the preimage of $y \in \R\p m$ are given by 
$$F(X) = \bigcup_{x \in X} F(x) ~~~\text{and}~~~ F^{-1}(y) = \{ x\in \mathbb{R}^n : F(x) \ni y \}.$$
The convexified mapping $\co F:\R\p n\to \R\p n$ is defined by $(\co F)(x) = \co F(x)$. 

A subset $A$ of $\mathbb{R}^n$ is semi-algebraic \cite{bochnak2013real} if it is a finite union of basic semi-algebraic sets, which are of the form 
    \begin{equation*}
        \{ x \in \mathbb{R}^n : f_1(x) > 0 , \hdots , f_p(x) > 0, f_{p+1}(x) = 0, \hdots , f_q(x) = 0\}
    \end{equation*}
    where $f_1,\hdots,f_q$ are polynomials with real coefficients. A function $f:\R\p n\to \R$ is semi-algebraic if $\gph f$ is semi-algebraic.

The following notion \cite[Definition 1 p. 41]{aubin1984differential} plays a key role in differential inclusions.

\begin{definition}
    A mapping $F:\R\p n \rightrightarrows \R\p m$ is upper semicontinuous at $\overline{x} \in \R\p n$ if for any neighborhood $V$ of $F(\overline{x})$, there exists a neighborhood $U$ of $\overline{x}$ such that $F(U)\subseteq V$.
\end{definition}

A related notion is as follows \cite[Definition 5.14]{rockafellar2009variational}.

\begin{definition}
    A mapping $F:\R\p n \rightrightarrows \R\p m$ is locally bounded at $\overline{x}\in \R\p n$ if there exists a neighborhood $U$ of $\overline{x}$ in $\R\p n$ such that $F(U)$ is bounded.
\end{definition}

Both notions hold without referring to a point if they hold at every point in $\R\p n$. We include a proof of the following simple facts in the Appendix for the reader's convenience.

\begin{fact}
    \label{fact:bounded}
    If $F:\R\p n \rightrightarrows \R\p n$ is locally bounded and $X\subseteq \R\p n$ is bounded, then $F(X)$ is bounded.
\end{fact}

The above definitions are related as follows.

\begin{fact}
\label{fact:usc_bounded}
Given $F:\R\p n\rightrightarrows \R^m$, the following are equivalent: 
    \begin{enumerate}[label=\rm{(\rm{\roman*})}]
        \item $F$ is upper semicontinuous with compact values;
        \item $F$ is locally bounded and has a closed graph.
    \end{enumerate}
\end{fact}

\begin{definition}
    A trajectory of $F:\R\p n \rightrightarrows\R\p n$ is an absolutely continuous curve $x:I\to\R\p n$ where $I$ is an interval of $\R$ such that
$$\forae t\in I, ~~~ x'(t) \in F(x(t)).$$
\end{definition}

We refer to solutions to the differential inclusion $\dot{x}\in F(x)$ as trajectories of $F$ over an interval $I = [0,T)$ for some $T\in(0,\infty]$ or $I = [0,T]$ for some $T\in(0,\infty)$. A solution $x:I\to\R\p n$ is maximal if for any other solution $y:J\to\R\p n$ such that $I\subseteq J$ and $x(t)=y(t)$ for all $t\in I$, we have $I=J$. A solution is globally defined if $I = [0,\infty)$. If $F:\R\p n \rightrightarrows\R\p n$ is upper semicontinuous with nonempty compact convex values, then the initial value problem 
% , $F$ admits a trajectory  there exists the differential inclusion 
$$\left\{
\begin{array}{cc}
     \dot{x} \in F(x),  \\
     x(0) = x_0,
\end{array}
\right.$$
admits a solution for any initial condition $x_0\in \R\p n$ by \cite[Theorem 3 p. 98]{aubin1984differential}. As a result, bounded maximal solutions are globally defined. For other existence results, including global existence, see Davy \cite[Theorem 4.2]{davy1972properties} and Seah \cite[Theorem 3.3]{seah1982existence}. 
Similar to differential equations, stable points must be critical points.

\begin{fact}
    \label{fact:equilibrium}
    Let $F:\R\p n\rightrightarrows \R^n$ be upper semicontinuous with nonempty compact convex values. If $\overline{x}\in\R\p n$ is stable, then $0\in F(\overline{x})$.
\end{fact}

Given the importance of convex values for local existence, sometimes it is useful to convexify a given mapping. The following fact comes in handy.

\begin{fact}
    \label{fact:neighborhood}
    Given a compact set $X\subseteq \R^n$ and a neighborhood $U$ of $X$, there exists $r>0$ such that $B_r(X)\subseteq U$.
\end{fact}

\cref{fact:neighborhood} can be used to show that convexifying preserves good properties.

\begin{fact}
    \label{fact:co_usc}
    If $F:\R^n\rightrightarrows\R^n$ is upper semicontinuous with compact values, then $\co F$ is upper semicontinuous with convex compact values.
\end{fact}

\section{Theory}
\label{sec:Theory}

This section develops the theory of d-stability.

\subsection{Lyapunov functions}
\label{subsec:Lyapunov functions}
We begin with two necessary conditions for a function to be d-Lyapunov, given an arbitrary mapping $F:\R\p n \rightrightarrows\R\p n$.

\begin{proposition}
\label{prop:dL_necessary}
    If $g:\R^n\to\eR$ is d-Lyapunov and Lipschitz continuous near $\overline{x}$, then $g$ is Lyapunov near $\overline{x}$. 
\end{proposition}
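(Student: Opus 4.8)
The plan is to reduce the claim to an infinitesimal, one-step inequality that follows from the d-Lyapunov property, and then to integrate it along a continuous trajectory. First I would fix a neighborhood $U$ of $\overline{x}$ on which $g$ is simultaneously d-Lyapunov with some threshold $\overline{\alpha}>0$ and $L$-Lipschitz, and recall that ``Lyapunov near $\overline{x}$'' means $g\circ x$ is non-increasing for every trajectory $x(\cdot)$ of $F$ that remains in $U$. Since $g$ is Lipschitz and $x$ is absolutely continuous, $g\circ x$ is absolutely continuous, so it suffices to prove that $(g\circ x)'(\tau)\le 0$ for almost every $\tau$.

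The key observation is that the d-Lyapunov property, specialized to the index $k=0$, already yields a usable infinitesimal inequality. Indeed, given $x_0\in U$ and any $v\in F(x_0)$, the point $x_1=x_0+\alpha v$ is an admissible Euler iterate for every $\alpha\in(0,\overline{\alpha}]$, so the defining implication forces $g(x_0+\alpha v)\le g(x_0)$ for all such $\alpha$ and all $v\in F(x_0)$. This is the crux: it converts the discrete monotonicity into a statement about each single admissible velocity.

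Next I would transport this inequality to the continuous trajectory. Fix a time $\tau$ at which $x$ is differentiable, $x'(\tau)\in F(x(\tau))$, and $(g\circ x)'(\tau)$ exists; almost every $\tau$ qualifies. Writing the first-order expansion $x(\tau+\alpha)=x(\tau)+\alpha x'(\tau)+o(\alpha)$ and using that $x(\tau)\in U$ with $x'(\tau)\in F(x(\tau))$, the one-step inequality gives $g(x(\tau)+\alpha x'(\tau))\le g(x(\tau))$ for small $\alpha>0$. The $L$-Lipschitz bound then controls the discretization error, $|g(x(\tau+\alpha))-g(x(\tau)+\alpha x'(\tau))|\le L\,|x(\tau+\alpha)-x(\tau)-\alpha x'(\tau)|=o(\alpha)$, whence the forward difference quotient satisfies $\frac{g(x(\tau+\alpha))-g(x(\tau))}{\alpha}\le \frac{o(\alpha)}{\alpha}$. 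Letting $\alpha\to 0^+$ yields $(g\circ x)'(\tau)\le 0$, and integrating over the trajectory shows $g\circ x$ is non-increasing, as required.

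I expect the main obstacle to be the reconciliation in the last step between the genuine trajectory point $x(\tau+\alpha)$ and the single Euler point $x(\tau)+\alpha x'(\tau)$: the two differ by the discretization error, and it is precisely the Lipschitz hypothesis that lets this $o(\alpha)$ error be absorbed so that the one-step decrease survives in the limit. A minor technical point worth checking is that $x(\tau)+\alpha x'(\tau)$ lies in $U$ for $\alpha$ small (true since $x(\tau)$ is interior), and that the relevant Euler sequence can be instantiated, which is guaranteed because $F$ is nonempty-valued along the trajectory.
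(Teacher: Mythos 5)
Your proposal is correct and follows essentially the same route as the paper's proof: both establish absolute continuity of $g\circ x$ from the Lipschitz hypothesis, use the one-step Euler inequality $g(x(t)+\alpha x'(t))\le g(x(t))$ coming from the d-Lyapunov property at $x(t)$ with velocity $x'(t)\in F(x(t))$, absorb the discretization error $x(t+\alpha)-x(t)-\alpha x'(t)=o(\alpha)$ via the Lipschitz constant to get $(g\circ x)'(t)\le 0$ almost everywhere, and then integrate. Your explicit isolation of the one-step inequality and the $o(\alpha)$ error control simply spells out what the paper's displayed limit computation does in compressed form.
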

\begin{proof}
Let $x:[0,T]\to \R^n$ be a trajectory of $F$ taking values near $\overline{x}$. Since $x$ is absolutely continuous and $g$ is Lipschitz continuous near $\overline{x}$, the composition $g\circ x$ is absolutely continuous. Thus $x$ and $g\circ x$ are almost surely differentiable. For almost every $t\in (0,T)$, we have 
\begin{align*}
    (g\circ x)'(t)&=\lim_{h\to 0}\frac{g(x(t+h))-g(x(t))}{h}\\
    &=\lim_{h\to 0}\frac{g(x(t)+hx'(t)+o(h))-g(x(t))}{h}\\
    &=\lim_{h\to 0}\frac{g(x(t)+hx'(t))+o(h)-g(x(t))}{h}\leq 0.
\end{align*}
Since $g\circ x$ is absolutely continuous, for all $0\leq a\leq b\leq T$, 
$  (g\circ x)(b)-(g\circ x)(a)=\int_a^b (g\circ x)'(t)dt\leq 0.     $
\end{proof} 
\begin{proposition}
    \label{prop:dL_necessary_D1}
    If $g:\R^n\to\eR$ is d-Lyapunov and differentiable at $x$, then for all $u \in F(x)$, $\langle \nabla g(x),u\rangle\leq 0$. In particular, if $0 \in \rint F(x)$, then $\langle \nabla g(x) , F(x) \rangle=\{0\}$.
\end{proposition}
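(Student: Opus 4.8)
The plan is to exploit the d-Lyapunov inequality over a \emph{single} Euler step and then pass to the limit as the step size vanishes. Interpreting ``d-Lyapunov'' as d-Lyapunov on $U=\R^n$, let $\overline\alpha>0$ be the associated threshold. Fix $u\in F(x)$. For each $\alpha\in(0,\overline\alpha]$ I would consider an Euler sequence $\{x_k\}_{k\in\N}$ satisfying \eqref{eq:euler} with $x_0=x$ and $x_1=x+\alpha u$; the first step is admissible precisely because $u\in F(x)$, and one extends it to a full infinite trajectory. Since $x_0=x\in U$, the defining implication of the d-Lyapunov property at $k=0$ yields $g(x+\alpha u)=g(x_1)\leq g(x_0)=g(x)$.

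Dividing by $\alpha>0$ gives $\tfrac{g(x+\alpha u)-g(x)}{\alpha}\leq 0$ for every $\alpha\in(0,\overline\alpha]$. As $g$ is differentiable at $x$, the left-hand side tends to the directional derivative $\langle\nabla g(x),u\rangle$ as $\alpha\to 0^+$, so $\langle\nabla g(x),u\rangle\leq 0$. Since $u\in F(x)$ was arbitrary, this settles the first claim.

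For the second claim, suppose $0\in\rint F(x)$ and fix $u\in F(x)$. The first part already gives $\langle\nabla g(x),u\rangle\leq 0$, so it suffices to establish the reverse inequality. Because the linear functional $\langle\nabla g(x),\cdot\rangle$ is nonpositive on $F(x)$, it is nonpositive on $\co F(x)$ as well. Now $0$ being a relative interior point, the segment from $u$ through $0$ prolongs slightly: there is $\epsilon>0$ with $-\epsilon u=0+\epsilon(0-u)\in\co F(x)$. Applying the nonpositivity to $-\epsilon u$ gives $\langle\nabla g(x),-\epsilon u\rangle\leq 0$, i.e.\ $\langle\nabla g(x),u\rangle\geq 0$. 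Hence $\langle\nabla g(x),u\rangle=0$ for every $u\in F(x)$, which is exactly $\langle\nabla g(x),F(x)\rangle=\{0\}$.

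The substance of the argument is light; the two delicate points are bookkeeping. First, the d-Lyapunov hypothesis is phrased through infinite Euler sequences, so the single step $x\mapsto x+\alpha u$ must be embedded into such a sequence, which tacitly relies on $F$ admitting a continuation trajectory from $x_1$ (automatic once $F$ has nonempty values). Second, the prolongation step $-\epsilon u\in\co F(x)$ is the standard relative-interior fact that from a relative interior point one may move a small amount along any direction of the affine hull and remain in the set; I expect this to be the main thing to state carefully, and working with $\co F(x)$ rather than $F(x)$ keeps the linear-functional argument valid even when $F(x)$ is not convex.
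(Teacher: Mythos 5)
Your proof is correct and follows essentially the same route as the paper: a single Euler step gives $g(x+\alpha u)\leq g(x)$, hence $\langle\nabla g(x),u\rangle\leq 0$ after dividing by $\alpha$ and letting $\alpha\to 0^+$, and the relative-interior hypothesis supplies $-\epsilon u$ in the set so that the same inequality forces $\langle\nabla g(x),u\rangle\geq 0$. The only cosmetic difference is that you pass through $\co F(x)$ for the prolongation step, whereas the paper observes directly that $\lambda u\in F(x)$ for all sufficiently small $\lambda$ of either sign, so the detour through the convex hull is unnecessary (though harmless).
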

\begin{proof}
    There exists $\overline{\alpha}>0$ such that 
$$ \forall \alpha\in (0,\overline{\alpha}],~\forall u\in F(x),\quad 0\leq g(x+\alpha u)-g(x)=\alpha\langle \nabla g(x),u \rangle+ o(\alpha)$$
and thus $\langle \nabla g(x),u \rangle\leq 0$ for all $u\in F(x)$. If $0 \in \rint F(x)$, then for all $u \in F(x)$, $\lambda u \in F(x)$ for all sufficiently small $\lambda \in \R$. Thus $0 \leq \langle \nabla g(x), \lambda u \rangle = \lambda \langle \nabla g(x),u \rangle$ and in particular $\langle \nabla g(x),u \rangle\geq 0$. 
\end{proof}

\begin{proposition}
    \label{prop:sufficient_dL1}
    Let $F:\R\p n \rightrightarrows \R\p n$ be locally bounded with a closed graph and $g:\R\p n \to \eR$ be $C\p {1,1}$ near $\overline{x}\in \R\p n$. If $\sup_{u\in F(\overline{x})} \langle \nabla g(\overline{x}) , u \rangle<0$, then
    $$\exists \overline{\alpha},\rho,\omega
    >0:~ \forall \alpha \in (0, \overline{\alpha}], ~ \forall x\in B_{\rho}(\overline{x}),~\forall u\in F(x),~~ g(x+\alpha u)-g(x) \leq -\omega\alpha.$$
    \end{proposition}
\begin{proof}
Let $\omega = -\sup_{u\in F(\overline{x})} \langle \nabla g(\overline{x}) , u \rangle/3$.
Since $F$ is locally bounded with a closed graph and $g$ is $C\p 1$ near $\overline{x}$, there exists $\rho>0$ such that 
$$\forall x\in B_{2\rho}(\overline{x}),~ \forall u \in F(x), ~~ \langle \nabla g(x),u\rangle < -2\omega.$$
Otherwise, there exists $x_k \to \overline{x}$ and $u_k \in F(x_k)$ such that $\langle \nabla g(x_k),u_k\rangle \geq -2\omega$. Since $F$ is locally bounded, $u_k$ is bounded and so $u_k\to \overline{u}$ up to a subsequence. As $F$ has a closed graph, $\overline{u}\in F(\overline{x})$. By continuity of $\nabla g$, one may pass to the limit $\langle \nabla g(\overline{x}),\overline{u}\rangle \geq -2\omega > -3\omega = \sup_{u\in F(\overline{x})} \langle \nabla g(\overline{x}) , u \rangle$, a contradiction. 

Since $g$ is $C\p {1,1}$ on $B_{2\rho}(\overline{x})$ after possibly reducing $\rho$, by \cite[Lemma 1.2.4]{nesterov2003introductory} there exists $L>0$ such that
$$\forall x,y \in B_{2\rho}(\overline{x}), ~~~ \left|g(y) - g(x) - \langle \nabla g(x), y-x \rangle\right| \leq L|x-y|\p 2/2.$$
As $F$ is locally bounded, there exists $R>0$ such that $|u| \leq R$ for all $x \in B_{\rho}(\overline{x})$ and $u\in F(x)$. Let $\overline{\alpha} = \min\{ 2\omega/(LR\p 2) , \rho/R\}$. For all $x \in B_{\rho}(\overline{x})$, $u\in F(x)$, and $\alpha \in (0, \overline{\alpha}]$, we have
  \begin{equation*}
     g(x+\alpha u) - g(x) \leq \langle \nabla g(x) , u \rangle \alpha + L|u|\p2 \alpha \p 2 /2 \leq -( 4\omega - L \alpha|u|\p 2 ) \alpha /2 \leq - \omega \alpha.\qedhere
  \end{equation*}
\end{proof}

The following proposition is proved similarly.

\begin{proposition}
    \label{prop:sufficient_dL2}
    Let $F:\R\p n \rightrightarrows \R\p n$ be locally bounded with a closed graph and $g:\R\p n \to \eR$ be $C\p {2,2}$ near $\overline{x}\in \R\p n$. If 
    $$\exists r>0:~ \forall x \in B_r(\overline{x}),~ \forall u\in F(x),~ \langle \nabla g(x) , u \rangle \leq 0 ~~~\land ~~~ \sup_{u\in F(\overline{x})} \langle \nabla\p2 g(\overline{x})  u , u \rangle<0,$$
    then
    $~~~\exists \overline{\alpha},\rho
    >0:~ \forall \alpha \in (0, \overline{\alpha}],~ \forall x\in B_{\rho}(\overline{x}),~\forall u\in F(x),~~  g(x+\alpha u)-g(x) \leq -\omega\alpha\p 2$.
    \end{proposition}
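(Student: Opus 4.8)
The plan is to follow the template of \cref{prop:sufficient_dL1}, replacing the first-order Taylor expansion by a second-order one and using the hypothesis $\langle \nabla g(x), u\rangle \leq 0$ to absorb the linear term. Write $S = \sup_{u \in F(\overline{x})}\langle \nabla^2 g(\overline{x})u, u\rangle < 0$ and set $\omega = -S/8$; the precise constant is immaterial, since the statement leaves $\omega$ implicit, and one only needs it defined in terms of the data.

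First I would upgrade the pointwise strict negativity of the Hessian quadratic form to a uniform one on a ball. By the same contradiction argument as in \cref{prop:sufficient_dL1} --- invoking local boundedness of $F$ to extract a bounded subsequence $u_k \to \overline{u}$, the closed graph to place $\overline{u} \in F(\overline{x})$, and continuity of $\nabla^2 g$ to pass to the limit --- there exists $\rho \in (0, r]$ such that
$$\forall x \in B_{2\rho}(\overline{x}),~\forall u \in F(x), \quad \langle \nabla^2 g(x)u, u\rangle < -4\omega.$$
Indeed, a violating sequence would yield $\langle \nabla^2 g(\overline{x})\overline{u}, \overline{u}\rangle \geq -4\omega = S/2 > S$, contradicting $\overline{u} \in F(\overline{x})$.

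Next I would invoke the third-order Taylor estimate available for $C^{2,2}$ functions (the analogue of \cite[Lemma 1.2.4]{nesterov2003introductory} one order higher): shrinking $\rho$ if necessary so that $\nabla^2 g$ is $M$-Lipschitz on $B_{2\rho}(\overline{x})$,
$$\left|g(y) - g(x) - \langle \nabla g(x), y-x\rangle - \tfrac12\langle \nabla^2 g(x)(y-x), y-x\rangle\right| \leq \tfrac{M}{6}|y-x|^3$$
for all $x, y \in B_{2\rho}(\overline{x})$. Taking $R$ to be a bound on $|u|$ over $x \in B_{\rho}(\overline{x})$, $u \in F(x)$, and $\overline{\alpha} = \min\{\rho/R,\, 6\omega/(MR^3)\}$, any $\alpha \in (0,\overline{\alpha}]$ keeps $x + \alpha u \in B_{2\rho}(\overline{x})$, so that for $x \in B_{\rho}(\overline{x})$ and $u \in F(x)$,
$$g(x+\alpha u) - g(x) \leq \langle \nabla g(x), u\rangle\,\alpha + \tfrac12\langle \nabla^2 g(x)u, u\rangle\,\alpha^2 + \tfrac{M}{6}|u|^3\alpha^3.$$
Here the first term is $\leq 0$ by the first hypothesis (since $x \in B_{\rho}(\overline{x}) \subseteq B_r(\overline{x})$), the second is $< -2\omega\alpha^2$ by the uniform bound above, and the third is $\leq \omega\alpha^2$ by the choice of $\overline{\alpha}$; combining the three gives $g(x+\alpha u) - g(x) \leq -\omega\alpha^2$.

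The one genuine point of care --- and the reason the statement pairs the Hessian condition with the inequality $\langle \nabla g(x), u\rangle \leq 0$ --- is that, unlike in \cref{prop:sufficient_dL1}, the linear term of the expansion is no longer strictly negative and cannot be made to dominate the higher-order remainder; it must instead be \emph{neutralized}. The first hypothesis does exactly this, leaving the negative quadratic term to drive the decrease at the slower $\alpha^2$ rate. Everything else is a routine transcription of the earlier proof.
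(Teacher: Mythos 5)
Your proof is correct and is precisely the argument the paper intends: the paper gives no explicit proof, stating only that the result "is proved similarly" to \cref{prop:sufficient_dL1}, and your write-up carries out that template faithfully (uniform Hessian negativity via the closed-graph/local-boundedness contradiction, the cubic-remainder Taylor bound for $C^{2,2}$ functions, and the first hypothesis neutralizing the linear term so the quadratic term drives the $-\omega\alpha^2$ decrease). Your observation that $\omega$ is left unquantified in the statement and should be fixed in terms of the data (e.g.\ $\omega=-S/8$) is also the right reading.
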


These sufficient conditions naturally lead us to single out a class of d-Lyapunov functions.

\begin{definition}
    \label{def:pdL}
    Given $F:\R\p n \rightrightarrows \R\p n$, $g:\R^n\to\eR$ is $p$-d-Lyapunov on $U \subseteq \R^n$ with $p \in [1,\infty)$ if
    $$\exists \overline{\alpha},\omega>0: ~ \forall \alpha\in (0,\overline{\alpha}], ~ \forall x \in U,~ \forall u\in F(x), ~~ g(x+\alpha u) - g(x) \leq - \omega \alpha\p p.$$
\end{definition}

We choose to write the theory with linear decrease but we could very well write it with a multiplicative decrease $g(x+\alpha u) \leq e\p {- \omega \alpha\p p}g(x)$ with nonnegative $g$. Linear decrease technically captures multiplicative decrease by exponentiating, but this requires allowing $g$ to take the value $-\infty$ at stable points, which we prefer to avoid.
%One might be surprised that $|u|$ does not show up in the upper bound. After all, if $g$ is continuous and $0 \in \inte F(x)$, it cannot be $p$-d-Lyapunov. We will precisely  
\cref{def:pdL} admits a multistep extension, which will be useful for studying normalized gradient descent \cite{josz2026implicit}.

\begin{definition}
    \label{def:pqdL}
    Given $F:\R\p n \rightrightarrows \R\p n$, $g:\R^n\to\eR$ is $(p,q)$-d-Lyapunov on $U \subseteq \R^n$ with $p\in[1,\infty)$ and $q \in \N\p *$ if there exist $\overline{\alpha},\omega>0$ such that 
\begin{equation*}
        \forall \{\alpha_k\}_{k\in\N} \subseteq (0,\overline{\alpha}], ~ \forall k \in \N,~~~ x_k \in U ~~~ \Longrightarrow ~~~ g(x_{k+q}) - g(x_k) \leq - \omega \min\{\alpha_k,\hdots,\alpha_{k+q-1}\}\p p
\end{equation*}
for any $\{x_k\}_{k\in\N}\subseteq \R\p n$ such that $x_{k+1}\in x_k +\alpha_k F(x_k)$ for all $k\in\N$.
\end{definition}

Notice that $(p,1)$-d-Lyapunov and $p$-d-Lyapunov mean the same thing. If $F:\R\p n\rightrightarrows\R\p n$ is locally bounded and $g:\R\p n \to \eR$ is $p$-d-Lyapunov on $B_{\rho}(\overline{x})$ for some $\rho>0$ and $\overline{x}\in\R\p n$, then for any $q \in \N\p *$, there exists $\rho'\in (0,\rho)$ such that $g$ is $(p,q)$-d-Lyapunov on $B_{\rho'}(\overline{x})$. On the other hand, a $(p,q)$-d-Lyapunov function need not be a d-Lyapunov function.

\subsection{Stable points}
\label{subsec:Stable points}

As with Lyapunov functions, it is natural to wonder whether stable and d-stable points are related. As \cref{eg:ellipse} will show, stable points can be d-unstable. The converse is also possible.

\begin{example}
    \label{eg:d-stable}
    Let 
    $$ F(x,y) = 
    -\begin{pmatrix}
    x \\ 2y
    \end{pmatrix} ~~\text{if}~~ x\p 2 \neq y
    ~~~~\text{and} ~~~~
    F(x,y) = 
    \co\left\{-\begin{pmatrix}
    x \\ 2y
    \end{pmatrix},
    \begin{pmatrix}
    x \\ 2y
    \end{pmatrix}\right\}
    ~~\text{if}~~ x\p 2 = y.
    $$
    The origin is unstable but d-stable.
\end{example}
\begin{proof}
    The trajectory $(x(t),y(t))=(x_0e\p t,y_0e\p{2t})$ initialized at $(x_0,y_0)\in\R\p 2$ with $x_0\p 2=y_0\neq 0$ diverges. With $\overline{\alpha} = 1/2$, $x\p 2 = y \implies [(x\p + = x \land y\p + = y) \lor ((x\p +)\p 2 > y\p +)]$ and $x\p 2>y \implies (x\p +)\p 2 > y\p +$. Indeed, there exists $\lambda \in [-1,1]$ such that $(x\p +)\p 2-y\p + = (x-\alpha \lambda x)\p 2 - (y - 2\alpha \lambda y) = (1-2\alpha \lambda)(x\p 2-y)+\alpha\p 2 \lambda \p 2 x\p 2$. It is then easy to see that discrete trajectories are eventually constant at a point $(x,y)$ where $x\p 2=y$ and $|(x,y)|\leq |(x_0,y_0)|$, or they converge to $(0,0)$.
    %$x_k\p 2=y_k$, then $x_{k+1} = x_k + \alpha_k(2t_k-1)x_k$ and $y_{k+1} = x_k + 2\alpha_k(2t_k-1)y_k$ for some $t_k\in [0,1]$. 
\end{proof}

Essential to the proof of d-stability is the upcoming tracking result. It is a simple generalization of \cite[Lemma 1]{josz2023lyapunov} to variable step sizes. 
The proof is included in the Appendix for completeness. Prior results on the distance between solutions to differential inclusions and their Euler discretizations assume Lipschitz continuity and linear growth with convex values \cite[Theorem]{dontchev1989error} \cite[Theorem 2.4]{dontchev1992difference} \cite[Theorem 4.3]{donchev1998stability}, 
or Lipschitz continuity and uniformly bounded nonconvex values \cite[Theorem 2.5]{sandberg2009convergence}.
We substitute the Lipschitz, growth, and boundedness assumptions with a uniform bound on the sequences generated by the Euler method. Also, our conclusion allows for uniformity with respect to the initial point and the step sizes.
% \cite[Lemma 4.1]{donchev1998stability}.
Here and below, let $\{t_k\}_{k\in \N}$ be such that 
$$t_0 = 0 ~~~\text{and} ~~~ \forall k\in \N\p*,~~ t_k = \alpha_0+\cdots+\alpha_{k-1}$$ 
whenever a sequence $\{\alpha_k\}_{k\in\N}$ is given.
\begin{lemma}
\label{lemma:tracking}
    Let $F:\mathbb{R}^n\rightrightarrows\mathbb{R}^n$ be upper semicontinuous with nonempty compact values. Let $X\subseteq \mathbb{R}^n$ be bounded and $T>0$. Assume there exist $r,\widehat\alpha>0$ such that for all $\{\alpha_k\}_{k\in \N} \subseteq (0,\widehat\alpha]$ and for all $\{x_k\}_{k\in\N} \subseteq \mathbb{R}^n$ such that
    \begin{equation}
    \label{eq:Euler}
                \forall k \in \N,~~~ x_{k+1} \in x_k + \alpha_k F(x_k), ~~~ x_0 \in X,
    \end{equation}
    $$\text{we have}~~~~~~~~~~~~~~~~~~~~~~~~~~~~~~ \forall k \in \N, ~~~ t_k \leq T ~\Longrightarrow~  x_k\in B_r(X).~~~~~~~~~~~~~~~~~~~~~~~~~~~~~~~~~~~~~~~~$$
    Then for all $\eta>0$, there exists $\overline{\alpha}>0$ such that for all $\{\alpha_k\}_{k\in\N} \subseteq (0,\overline{\alpha}]$ and $\{x_k\}_{k\in\N} \subseteq \mathbb{R}^n$ satisfying \eqref{eq:Euler}, there exists an absolutely continuous function $x:[0,T]\rightarrow\mathbb{R}^n$ such that
    \begin{subequations}
    \begin{gather}
    \label{subeq:DI}
    \forae t \in (0,T), ~~~ x'(t) \in \co F(x(t)), ~~~ x(0) \in \overline{X},\\[2mm]
    \label{subeq:tracking}
     \forall k \in \N, ~~~ t_k \leq T ~\Longrightarrow~  |x_k-x(t_k)|\leq \eta.
     \end{gather}
    \end{subequations}
\end{lemma}

In order to convert the tracking lemma into a descent lemma,
the following assumption will come in handy.

\begin{assumption}
    \label{assume:Ff}
    Let
    \begin{enumerate}[label=\rm{(\rm{\roman*})}]
        \item $F:\R^n\rightrightarrows \R^n$ be upper semicontinuous with nonempty compact values; \label{item:F} 
        \item $f:\R^n\to\R$ be a potential of a conservative field $D:\R^n\rightrightarrows\R^n$ such that 
        $$\forall \widehat{x}\in \R\p n,~ \exists (\rho,\kappa)\in (0,\infty)\times \cP,~ \forall x\in B_{\rho}(\widehat{x}),~\max_{u\in  \co F(x)} \min_{v\in D(x)} \langle u,v\rangle \leq -\min_{w\in D(x)}\kappa(w)$$
        and $(0,\overline{\ell})\cap f(D\p {-1}(0)) = \emptyset$ for some $\overline{\ell}>0$. \label{item:f} 
\end{enumerate}
\end{assumption}

The following descent lemma revisits the arguments developed for proving d-stability with constant step sizes of strict local minima of locally Lipschitz semi-algebraic functions \cite[Theorem 2]{josz2023lyapunov}. In addition to incorporating variable step sizes, it provides a uniform upper bound on the potential function, as well as a decrease over time. These new properties will be crucial for harnessing the synergy between Lyapunov and d-Lyapunov functions.

\begin{lemma}
\label{lemma:descent}
Under \cref{assume:Ff}, let $Y\subseteq \R\p n$ be bounded and $r>0$. There exist $T>0$ and $\kappa\in \cP$ such that for all $\ell \in (0,\overline{\ell})$ and any set $X \subseteq [f\leq\ell]\cap Y$, there exists $\overline{\alpha}>0$ such that, for all $\{\alpha_k\}_{k\in\N}\subseteq (0,\overline{\alpha}]$ and $\{x_k\}_{k\in\N}\subseteq \R\p n$ satisfying \eqref{eq:Euler}, we have 
$$
\begin{array}{ccc}
   t_k \leq T & \Longrightarrow & x_k \in [f \leq 3\ell/2] \cap B_r(X), \\[2mm] 
   T/2\leq t_k \leq T & \Longrightarrow & f(x_k) \leq \ell - \min\{\ell,\zeta T\}/6,
\end{array}
$$
%$$t_k \leq T~\Longrightarrow~ x_k \in [f \leq 3\ell/2] \cap B_r(X) ~~~ \land ~~ ~ T/2\leq t_k \leq T ~\Longrightarrow ~ f(x_k) \leq \ell - \min\{\ell,\zeta T\}/6$$
for all $k \in \N$ where $\zeta = \inf \{ \kappa(w): w\in D(B_r(X) \cap [\ell/2 \leq f \leq \ell]) \}$.
\end{lemma}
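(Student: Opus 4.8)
The plan is to combine an a-priori containment estimate for the Euler iterates with the tracking guarantee of \cref{lemma:Euler_method}, run the quantitative Lyapunov descent on the tracking trajectory, and then transfer the decrease back to the iterates through the Lipschitz continuity of $f$. First I would reduce to the case that $X$ is compact, replacing it by its closure inside the compact set $[f\le\ell]\cap Y$; this leaves $B_r(X)$ and $\zeta$ unchanged. Since $F$ is locally bounded with a closed graph, it is upper semicontinuous with nonempty compact values, and by the discussion following \cref{lemma:Euler_method} the convex hull $\co F$ is upper semicontinuous with nonempty compact convex values. Let $M=\sup\{|u|:x\in B_r(Y),\ u\in\co F(x)\}<\infty$ and fix $T>0$ with $MT\le r$, chosen depending only on $Y$ and $r$, not on $\ell$ or $X$. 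A straightforward induction on \eqref{eq:Euler} using $|x_{k+1}-x_k|\le M\alpha_k$ while $x_k\in B_r(Y)$ shows $d(x_k,X)\le M t_k\le r$ whenever $t_k\le T$, hence $x_k\in B_r(X)$; this is precisely the hypothesis needed to apply \cref{lemma:Euler_method} to $\co F$ on $X$ with horizon $T$.

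Next I would fix $\ell\in(0,\overline\ell)$ and $X$, and let $x(\cdot)$ be a tracking trajectory of $\co F$ produced by \cref{lemma:Euler_method} for a tolerance $\eta>0$ to be chosen, so that $x'(t)\in\co F(x(t))$ for a.e.\ $t$, $x(0)\in X$, and $|x_k-x(t_k)|\le\eta$ for $t_k\le T$; the same speed bound gives $x(t)\in B_r(X)$ for all $t\in[0,T]$. On $B_r(X)\cap[\ell/2\le f\le\ell]$, where \cref{assume:Ff}\ref{item:f} applies, there are no $D$-critical values, so the lower semicontinuous function $d(0,D(\cdot))$ is positive on this compact set and attains a positive minimum $\zeta>0$. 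For a.e.\ $t$, the defining chain-rule property of the conservative field $D$ together with the angle condition in \cref{assume:Ff}\ref{item:f} produces some $v\in D(x(t))$ with $(f\circ x)'(t)=\langle v,x'(t)\rangle\le-\kappa|v|^2\le0$, so $f\circ x$ is non-increasing; when $x(t)\in[\ell/2\le f\le\ell]$ one has $|v|\ge d(0,D(x(t)))\ge\zeta$, whence $(f\circ x)'(t)\le-\kappa\zeta^2$. Splitting into the case where $f(x(\cdot))$ stays $\ge\ell/2$ on $[0,T/2]$ (integrating the rate $-\kappa\zeta^2$ yields a decrease of at least $\kappa\zeta^2 T/2$) and the case where it drops to $\ell/2$ before $T/2$ (after which monotonicity keeps it $\le\ell/2$), I obtain $f(x(t))\le\ell-\min\{\kappa\zeta^2 T,\ell\}/2$ for all $t\in[T/2,T]$, and $f(x(t))\le f(x(0))\le\ell$ for all $t\in[0,T]$.

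Finally, let $L$ be a Lipschitz constant for $f$ on $B_r(Y)$ and choose $\eta\le\min\{\kappa\zeta^2 T,\ell\}/(3L)$, with $\overline\alpha$ the corresponding constant from \cref{lemma:Euler_method}. From $|x_k-x(t_k)|\le\eta$ we get $f(x_k)\le f(x(t_k))+L\eta$, so $f(x_k)\le\ell+\ell/2=3\ell/2$ for $t_k\le T$, and $f(x_k)\le\ell-\min\{\kappa\zeta^2 T,\ell\}/2+L\eta\le\ell-\min\{\kappa\zeta^2 T,\ell\}/6$ for $T/2\le t_k\le T$. Together with the a-priori containment $x_k\in B_r(X)$ established in the first step, this is exactly the claimed conclusion.

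The main obstacle is organizational rather than computational: the quantifiers must be handled in the order $T$ (from $Y$ and $r$), then $\ell$ and $X$ (hence $\zeta$), then $\eta$ and $\overline\alpha$, and the single bound $\min\{\ell,\kappa\zeta^2 T\}/6$ must simultaneously absorb the two descent regimes (a slow rate-$\kappa\zeta^2$ decrease accumulated over time $T$, or a fast drop below $\ell/2$) together with the tracking error $L\eta$, which is why $\eta$ can only be fixed after $\zeta$. The one genuinely delicate analytic point is that once $f\circ x$ reaches $\ell/2$ it remains below it, which relies on the monotonicity holding throughout the band of \cref{assume:Ff}\ref{item:f} rather than on a single endpoint estimate.
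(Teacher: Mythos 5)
Your proposal is correct and follows essentially the same route as the paper's proof: an a-priori containment bound from the velocity bound on $\co F$ over $B_r(Y)$ (fixing $T$ before $\ell$ and $X$), the tracking guarantee of \cref{lemma:Euler_method} applied to $\co F$, the continuous-time descent $(f\circ x)'\le-\kappa\, d(0,D(x(t)))^2$ on the band giving $f(x(t_k))\le \ell-\min\{\ell/2,\kappa\zeta^2t_k\}$, and the transfer to the iterates with $\eta=\min\{\ell,\kappa\zeta^2T\}/(3L)$, yielding the same $/6$ constant. The only differences are cosmetic (separate constants $M$ and $L$ instead of one, and the explicit reduction to compact $X$).
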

\begin{proof}
    Since $Y$ is bounded, \cref{assume:Ff} \ref{item:f} implies the existence of $\kappa\in\cP$ such that
    \begin{equation*}
        \forall x\in B_r(Y),~~~\max_{u\in \co F(x)} \min_{v\in D(x)} \langle u,v\rangle \leq -\min_{w\in D(x)}\kappa(w). %\label{eq:Y}
    \end{equation*}
    Let $L > \sup \{|v| : v \in F(B_r(Y))\}$ be a Lipschitz constant of $f$ on $B_r(Y)$ and let $T < r/L$. Note that $L<\infty$ since $F$ is upper semicontinuous with compact values by \cref{assum} \ref{item:F}, and thus preserves boundedness by \cref{fact:bounded} and \cref{fact:usc_bounded}. Let $\{\alpha_k\}_{k\in\N}\subseteq (0,\infty)$ and $\{x_k\}_{k\in\N}\subseteq \R\p n$ be such that $x_{k+1} = x_k + \alpha_k v_k$ for some $v_k \in F(x_k)$ for all $k\in \N$ with $x_0 \in X$. For all $k\in \N$, if $t_k \leq T$, then $x_k \in B_r(X)$. Indeed, $x_0 \in B_r(X)$ and by induction
$$|x_k-x_0| = \sum_{i=0}\p {k-1} |x_{i+1}-x_i| \leq \sum_{i=0}\p {k-1} \alpha_i|v_i| \leq t_k L \leq TL < r.$$

The constant $\zeta$ is positive since $[\ell/2,\ell]\subseteq (0,\overline\ell)$ is devoid of $D$-critical values of $f$ by \cref{assume:Ff} \ref{item:f}. If it were zero, one would be reach a contradiction since $\kappa \in \cP$ and $D$ has a closed graph. Let $\eta = \min\{\ell,\zeta T\}/(3L)$. By \cref{fact:co_usc} and \cref{lemma:tracking}, there exists $\overline{\alpha}>0$ such that for all $\{\alpha_k\}_{k\in\N} \subseteq (0,\overline{\alpha}]$ and $\{x_k\}_{k\in\N} \subseteq \mathbb{R}^n$ satisfying \eqref{eq:Euler}, there exists an absolutely continuous function $x:[0,T]\rightarrow\mathbb{R}^n$ satisfying \eqref{subeq:DI}-\eqref{subeq:tracking}. Similar to the discrete case, if $t = \inf \{ s \geq 0 : |x(s)-x(0)|\geq r\}\leq T$, then one obtains the contradiction
\begin{equation*}
    r \leq |x(t)-x(0)| = \left| \int_0^t x'(s)ds \right| \leq \int_0^t |x'(s)|ds \leq tL \leq TL < r.
\end{equation*}
Thus $x([0,T]) \subseteq B_r(X)$. Let $k\in \N$ be such that $t_k \leq T$. We have
\begin{subequations}
    \begin{align}
    f(x_k) & = f(x_k) - f(x(t_k)) + f(x(t_k)) \label{level_a} \\
	   & \leq L|x_k - x(t_k)| + \max\{\ell/2,\ell - \zeta t_k\} \label{level_b} \\
          & \leq L\eta + \ell - \min\{\ell/2,\zeta t_k\} \label{level_c} \\
          & \leq \ell/2 + \ell. \label{level_d}
    \end{align}
\end{subequations}
Indeed, the bound on first term in \eqref{level_a} is the result of applying local Lipschitz continuity of $f$ to $x_k,x(t_k)\in B_r(X)\subseteq B_r(Y)$. As for the second term in \eqref{level_a}, if it is greater than or equal to $\ell/2$, then for all $t \in [0,t_k]$, we have $\ell/2 \leq  f(x(t_k)) \leq  f(x(t)) \leq f(x(0)) \leq \ell$. This holds because $D$ is a conservative field for $f$, and so $$\forae t\in(0,T),~~~ (f \circ x)'(t) = \inf_{v\in D(x(t))}\langle x'(t),v \rangle \leq \sup_{u\in\co F(x(t))} \inf_{v\in D(x(t))}\langle u,v \rangle \leq -\inf_{w\in D(x(t))} \kappa (w) \leq - \zeta.$$ 
Hence
%This is due to \eqref{eq:Y} and $x(t)\in B_r(X) \subseteq B_r(Y)$. By definition of $\zeta$, $d(0,D(x(t))) \geq \zeta$ and
$$    f(x(t_k)) = f(x(0)) - \kappa \int_0^{t_k}(f \circ x)'(t) dt \leq \ell -  \zeta t_k.$$ 
In \eqref{level_c}, we use the tracking bound \eqref{subeq:tracking} and rewrite the maximum into a minimum. Finally, \eqref{level_d} follows from the definition of $\eta$, namely, $\eta = \min\{\ell,\zeta T\}/(3L)\leq \ell/(2L)$. In particular, when $t_k \geq T/2$, \eqref{level_c} becomes
\begin{equation*} 
f(x_k) \leq L\eta + \ell - \min\{\ell/2,\zeta t_k\} \leq L\eta + \ell - \min\{\ell/2,\zeta T/2\} = \ell - \min\{\ell,\zeta T\}/6. \qedhere \end{equation*}
%One concludes by taking any $T \in (0,\min\{r/L,6\ell/(\zeta)\}]$ and setting $\sigma = \kappa \zeta\p 2/6$.
\end{proof}

We are now ready to prove our first main result. It forms the basis for ensuing results.

\begin{proof}[Proof of \cref{thm:stable_point}]
Without loss of generality, assume that $f(\overline x) =g(\overline x)=0$. Since $\overline{x}$ is a strict local minimum of $f+g$ and $f+g\leq 2\max\{f,g\}$, it is also a strict local minimum of $\widehat f=\max\{f,g\}$. Thus, for all sufficiently small $\epsilon>0$, we have $\widehat f(x)>0$ for all $x \in B_{2\epsilon}(\overline{x})\setminus \{\overline{x}\}$. Fix such an $\epsilon>0$ from now on and assume $f$ and $g$ are continuous on $B_{2\epsilon}(\overline x)$. Let $\ell=\min\{\widehat f(x)/2:~\epsilon/2\leq |x-\overline{x}|\leq \epsilon \}>0$ be such that $(0,\ell]$ is devoid of $D$-critical values of $f$, after possibly reducing $\epsilon$. Naturally, $[\widehat f\leq \ell]\cap {B}_{\epsilon}(\overline{x})\subseteq B_{\epsilon/2}(\overline{x})$ and $[\widehat f\leq \ell]\cap {B}_{\epsilon}(\overline{x})$ is bounded. After possibly reducing $\epsilon$ again, $g$ is d-Lyapunov on $B_\epsilon(\overline{x})$ with upper bound $\widehat \alpha>0$ on the step sizes. By continuity of $\widehat f$ on $B_\epsilon(\overline{x})$, there exists $\delta>0$ such that 
\begin{equation}
    \label{eq:inclusions}
      B_{\delta}(\overline{x})\subseteq [\widehat f\leq \ell]\cap {B}_{\epsilon}(\overline{x})\subseteq B_{\epsilon/2}(\overline{x}). 
\end{equation}
By applying \cref{lemma:descent} to $r=\epsilon/2$ and the bounded set $Y = B_\epsilon(\overline{x})$, there exist $T>0$ and $\overline{\alpha} \in (0,\min\{\widehat \alpha,T/2\}]$ such that, for all $\{\alpha_k\}_{k\in\N}\subseteq (0,\overline{\alpha}]$ and $\{x_k\}_{k\in\N}\subseteq \R\p n$ satisfying 
$$\forall k \in \N, ~~~ x_{k+1} \in x_k + \alpha_k F(x_k), ~~~ x_0 \in X = [\widehat f\leq \ell]\cap B_{\epsilon}(\overline{x}),$$
for all $k \in \N$, we have 
$$t_k \leq T~\Longrightarrow~ x_k \in B_{\epsilon/2}([\widehat f\leq \ell]\cap B_{\epsilon}(\overline{x})) \subseteq B_{\epsilon}(\overline{x}) ~~~ \land ~~ ~ T/2\leq t_k \leq T ~\Longrightarrow ~ f(x_k) \leq \ell.$$ 
This applies in particular when $x_0\in B_\delta(\overline{x})$ by \eqref{eq:inclusions}. Since $\alpha_k\leq \overline{\alpha}\leq\widehat\alpha$, if $t_k \leq T$, then $\ell\geq g(x_0)\geq\dots\geq g(x_k)$.
Let $K = \max \{ k \in \N: t_k \leq T\}$. If $K=\infty$, then the proof is done. Otherwise, since $\alpha_k \leq \overline{\alpha}\leq T/2$ for all $k\in\N$, one has $2 \leq K < \infty$ and $t_K \geq T/2$. Hence $x_K \in [f\leq \ell]\cap [g\leq\ell] \cap B_\epsilon(\overline{x}) = [\widehat f\leq \ell]\cap B_\epsilon(\overline{x})$. One now concludes by induction.
\end{proof}

\subsection{Stable sets}
    \label{subsec:Stable sets}
% We first extend the definitions of local optimality and stability from points to sets.

% \begin{definition}\cite[Definition 2.2]{josz2023certifying}
% \label{def:set_local_min}
% A nonempty closed set $S\subseteq\mathbb{R}^n$ is a local minimum of $f:\mathbb{R}^n\rightarrow\overline{\mathbb{R}}$ if there exists a neighborhood $U$ of $S$ such that
% \begin{equation*}
%     \forall x \in S, ~ \forall y \in U\setminus S, ~~~ f(x)\leq f(y).
% \end{equation*}
% \end{definition}

The d-Lyapunov requirement around a point in
\cref{thm:stable_point} can be too stringent in applications. When that happens, it is still possible to verify it in some regions around a set. This calls for extending the notion of d-stability from points to sets, mimicking the existing extension for stability \cite[Definition 8.1]{han2017attractors}.

\begin{definition}
\label{def:stable_set}
Given $F:\mathbb{R}^n \rightrightarrows \mathbb{R}^n$, a set $X \subseteq \mathbb{R}^n$ is d-stable if
$$ \forall \epsilon >0, ~ \exists \delta,\overline{\alpha}>0: ~ \forall \{\alpha_k\}_{k \in \N} \subseteq (0,\overline{\alpha}], ~~~ 
        x_0 \in B_{\delta}(X) ~~~ \Longrightarrow ~~~ 
        % \forall k \in \N, ~ x_k \in 
        \{x_k\}_{k\in \N} \subseteq B_{\epsilon}(X) $$
for any $\{x_k\}_{k\in \N} \subseteq \mathbb{R}^n$ such that  $x_{k+1} \in x_k + \alpha_k F(x_k)$ for all $k\in\N$. 
\end{definition}

We also introduce the notion of asymptotic d-stability.

\begin{definition}
\label{def:asym_stable_set}
Given $F:\mathbb{R}^n \rightrightarrows \mathbb{R}^n$ and $p\in[1,\infty)$, a set $X \subseteq \mathbb{R}^n$ is asymptotically $p$-d-stable if it is d-stable and
$$ \exists \delta_0,\overline{\alpha}_0>0: ~ \forall \{\alpha_k\}_{k \in \N}\subseteq (0,\overline{\alpha}_0], ~~~ 
        x_0 \in B_{\delta_0}(X) ~~~ \Longrightarrow ~~~ d(x_k,X) \to 0 $$
for any $\{x_k\}_{k\in \N} \subseteq \mathbb{R}^n$ such that $x_{k+1} \in x_k + \alpha_k F(x_k)$ for all $k\in\N$ where $\sum_{k=0}\p \infty \alpha_k\p p=\infty$ and $\alpha_k\to 0$.
\end{definition}

\cref{thm:stable_point} admits the following extension.

\begin{theorem}
\label{thm:stable_set}
Under \cref{assume:Ff} with $\min f = 0$, suppose
\begin{enumerate}[label=\rm{(\rm{\roman*})}]
    \item $g:\R^n\to\eR$ is continuous near $\arg\min f$ with $\min g = 0$;
    \item $g$ is d-Lyapunov (resp. $p$-d-Lyapunov) near every point in $\arg\min f\cap [g>0]$ near $\arg\min f+g$;
    \item $\arg\min f+g$ is bounded; 
\end{enumerate}
Then $\arg\min f+g$ is d-stable (resp. asymptotically $p$-d-stable). 
\end{theorem}
\begin{proof}
Let $\widehat f=\max\{f,g\}$. Since $f$ and $g$ are continuous near the bounded set $\arg\min f+g$, $\widehat f$ is continuous near the bounded set $[\widehat f=0]$. For $\epsilon>0$ sufficiently small, let $$\ell=\min\{\widehat f(x)/2:~\epsilon/2\leq d(x,[\widehat f=0])\leq \epsilon \}\in (0,\overline{\ell}).$$ Naturally, $[\widehat f\leq \ell]\cap B_{\epsilon}([\widehat f=0])\subseteq B_{\epsilon/2}([\widehat f=0])$. After possibly reducing $\epsilon$, $g$ is d-Lyapunov (resp. $p$-d-Lyapunov) near every point in $[f=0]\cap [0<g\leq \ell]\cap B_{\epsilon}([\widehat f=0])$. Since $[f = 0]\cap [\ell/2\leq g\leq \ell]\cap B_{\epsilon}([\widehat f=0])$ is bounded, there exists $\rho>0$ such that $g$ is d-Lyapunov (resp. $p$-d-Lyapunov) on $B_\rho( [f=0]\cap [\ell/2\leq g\leq \ell]\cap B_{\epsilon}([\widehat f=0]) )$ with upper bound $\widehat \alpha>0$ on the step sizes. Again by boundedness, there exists $\ell'\in(0,\ell)$ such that 
$$   [f\leq 3\ell'/2]  \cap [\ell/2\leq g\leq \ell]\cap B_\epsilon([\widehat f=0])\subseteq      B_\rho ([f=0]\cap [\ell/2\leq g\leq \ell]\cap B_{\epsilon}([\widehat f=0]) ).$$
If not, one readily obtains a contradiction using a sequence $x_k\in [f\leq \ell/(k+1)]  \cap [\ell/2\leq g\leq \ell]\cap B_\epsilon([\widehat f=0]) \subseteq B_{\epsilon/2}([\widehat f=0])$ which must have a limit point $\overline{x}\in [f=0]  \cap [\ell/2\leq g\leq \ell]\cap B_\epsilon([\widehat f=0])$. 

By continuity of $\widehat f$, there exists $\delta>0$ such that 
\begin{equation}
    \label{eq:inclusions_set}
      B_{\delta}([\widehat f=0])\subseteq [f\leq \ell']  \cap [ g\leq \ell]\cap B_\epsilon([\widehat f=0])\subseteq B_{\epsilon/2}([\widehat f=0]).
\end{equation}
By applying \cref{lemma:descent} to $r=\epsilon
/2$ and the bounded set $Y = B_\epsilon([\widehat f=0])$, there exist $T>0$, $\kappa\in \cP$, and $\overline{\alpha} \in (0,\min\{\widehat \alpha,T/2\}]$ such that, for all $\{\alpha_k\}_{k\in\N}\subseteq (0,\overline{\alpha}]$ and $\{x_k\}_{k\in\N}\subseteq \R\p n$ satisfying 
$$\forall k \in \N, ~~~ x_{k+1} \in x_k + \alpha_k F(x_k), ~~~ x_0 \in X = [f\leq \ell']  \cap [ g\leq \ell]\cap B_\epsilon([\widehat f=0]),$$
for all $k\in\N$, we have 
$$
\begin{array}{ccc}
    t_k \leq T & \Longrightarrow & x_k \in [f\leq 3\ell'/2]\cap B_\epsilon([\widehat f=0]), \\[2mm] 
    T/2\leq t_k \leq T & \Longrightarrow & f(x_k) \leq \ell'- \min\{\ell',\zeta T\}/6,
\end{array}
$$
where $\zeta = \inf \{ \kappa(w) : w \in D(B_r(X) \cap [\ell'/2 \leq f \leq \ell'])\}$. This applies in particular when $x_0\in B_{\delta}([\widehat f=0])$ by \eqref{eq:inclusions_set}. 

Suppose $t_k \leq T$. If $\ell/2\leq g(x_k)\leq \ell$, then $g(x_{k+1})\leq g(x_k)$ since $g$ is d-Lyapunov on $[f\leq 3\ell'/2]  \cap [\ell/2\leq g\leq \ell]\cap B_\epsilon([\widehat f=0])$ and $\alpha_k\leq \overline{\alpha}\leq\widehat\alpha$ (resp. $g(x_{k+1})\leq g(x_k) - \omega\alpha_k\p p$ if $g$ is $p$-d-Lyapunov). If $g(x_k)<\ell/2$, then $g(x_{k+1})=g(x_k)+g(x_{k+1})-g(x_k)\leq \ell/2+\ell/6=2\ell/3 \leq \ell$. The last inequality holds after possibly reducing the upper bound $\overline{\alpha}$ on the step sizes, since $g$ is uniformly continuous on $B_{\epsilon}([\widehat f=0])$. Namely, there exists $\widetilde\alpha>0$ such that 
$$
    \forall \alpha\in (0,\widetilde\alpha],~\forall x\in B_{\epsilon}([\widehat f=0]),~\forall v\in F(x), \quad |g(x+\alpha v)-g(x)|\leq \ell/6.
$$

Let $K = \max \{ k \in \N: t_k \leq T\}$. If $K=\infty$, then the proof of stability is done. Otherwise, since $\alpha_k \leq \overline{\alpha}\leq T/2$ for all $k\in\N$, one has $2 \leq K < \infty$ and $t_K \geq T/2$. Hence $f(x_K) \leq \ell'-\min\{\ell',\zeta T\}/6 \leq \ell'$ and $x_K \in [f\leq \ell']  \cap [ g\leq \ell]\cap B_\epsilon([\widehat f=0])$. Stability now follows by induction. Thus
$$\forall \{\alpha_k\}_{k\in\N} \subseteq (0,\overline{\alpha}], ~~~ x_0 \in B_{\delta}([\widehat f=0]) ~~~ \Longrightarrow ~~~ \{x_k\}_{k\in\N} \subseteq B_{\epsilon}([\widehat f=0])$$
for any $\{x_k\}_{k\in \N} \subseteq \mathbb{R}^n$ such that $x_{k+1} \in x_k + \alpha_k F(x_k)$ for all $k\in \N$.

As for asymptotic stability, observe that $\sum_{k=0}\p\infty\alpha_k\p p = \infty$ with $p\in[1,\infty)$ implies $\sum_{k=0}\p\infty\alpha_k = \infty$. Thus $t_k\to \infty$, $K < \infty$, and $f(x_K) \leq \ell'-\zeta T/6$. Since $\alpha_k\to 0$, one may eventually apply \cref{lemma:descent} to $\ell'-\min\{\ell',\zeta T\}/6$ and $[f\leq \ell'-\min\{\ell',\zeta T\}/6]  \cap [ g\leq \ell]\cap B_\epsilon([\widehat f=0])$ with the same $r$, $Y$, $T$, and $\kappa$, implying that $f(x_k)\leq 3(\ell'-\min\{\ell',\zeta T\}/6)/2$ for all sufficiently large $k\in \N$. Let $\{\zeta_i\}_{i\in\N}$ and $\{\ell_i'\}_{i\in\N}$ be defined by $\zeta_0=\zeta$, $\ell'_0 = \ell'$, 
$$\zeta_i = \inf \{\kappa(v) : v \in D(B_r(X) \cap [\ell_i'/2 \leq f \leq \ell_i'])\},$$ 
and $\ell'_{i+1} = \ell_i'-\min\{\ell_i',\zeta_i T\}/6$. By induction, $f(x_k) \leq 3\ell'_i/2$ eventually holds for all $i\in\N$. 

Since $\ell_i'$ is decreasing and positive, it has a limit $\overline \ell'\in [0,\ell'] \subseteq [0,\ell) \subseteq (0,\overline{\ell})$. As $F$ is locally bounded by \cref{fact:usc_bounded}, and $\kappa\in \cP$, $\zeta_i$ is bounded and admits a limit point $\overline\zeta$. Passing to the limit yields $\overline\ell' = \overline\ell' - \min\{\overline\ell',\overline\zeta T\}/6$, i.e., $\min\{\overline\ell',\overline\zeta T\}=0$. 
As $(0,\overline{\ell})$ is devoid of $D$-critical values of $f$ by \cref{assume:Ff} \ref{item:f}, $\overline\zeta>0$ if $\overline{\ell}'>0$, and so $\overline\ell'=0$. Since $\ell_i'\to 0$, we may improve our earlier statement: $f(x_k) \leq \ell'_i$ eventually holds for all $i\in\N$.

On the other hand, $g(x_k) \leq 2\ell/3$ eventually holds since $\alpha_0\p p + \alpha_1\p p + \cdots = \infty$. After possibly reducing $\rho$ and $\widehat \alpha$, $g$ is $p$-d-Lyapunov on $B_\rho( [f=0]\cap [\ell/3\leq g\leq 2\ell/3]\cap B_{\epsilon}([\widehat f=0]))$. As $\ell_i'\to 0$, eventually
$$[f\leq 3\ell_i'/2]  \cap [\ell/3\leq g\leq 2\ell/3]\cap B_\epsilon([\widehat f=0])\subseteq      B_\rho ([f=0]\cap [\ell/3\leq g\leq 2\ell/3]\cap B_{\epsilon}([\widehat f=0]) ).$$
Applying \cref{lemma:descent} to such $\ell'_i$ and $X_i = [f\leq \ell_i']  \cap [\ell/3\leq g\leq 2\ell/3]\cap B_\epsilon([\widehat f=0])$ with the same $r$, $Y$, $T$, and $\kappa$, we find that $g(x_k) \leq 4\ell/9$ eventually holds. By induction, $g(x_k) \leq (2/3)\p j\ell$ eventually holds for all $j\in\N$. Since $f(x_k)\to 0$, $g(x_k)\to 0$, and $x_k\in B_{\epsilon}([\widehat f=0])$ for all $k\in \N$, we conclude that $d(x_k,[\widehat f=0])\to 0$.
\end{proof}

\subsection{Attractors}
\label{subsec:attractors}
The notion of d-stability of a set can be strengthened as follows.

\begin{definition}
\label{def:attractor}
    Given $F:\mathbb{R}^n \rightrightarrows \mathbb{R}^n$, $f:\mathbb{R}^n \rightarrow \R_+$, and $p\in [1,\infty)$, a bounded set $A \subseteq [f=0]$ is a $p$-attractor if for any $\epsilon,\ell>0$ and any bounded set $X \subseteq [f=0]$, 
%$$\exists \delta,\overline{\alpha}>0: \forall \{\alpha_k\}_{k\in\N} \subseteq (0,\overline{\alpha}], \forall x_0 \in B_{\delta}(X), ~  f(x_k)\leq \sup_A f+\ell ~\land ~ \exists k_0 \in \mathbb{N}: \{x_k\}_{k\geq k_0} \subseteq B_{\epsilon}(A)$$
$$
    \exists \delta,\overline{\alpha}>0:~ \forall \{\alpha_k\}_{k\in\N} \subseteq (0,\overline{\alpha}],~ \forall x_0 \in B_{\delta}(X), ~~~ f(x_k)\leq \ell ~~\land~~ \exists k_0 \in \mathbb{N}: \{x_k\}_{k\geq k_0} \subseteq B_{\epsilon}(A)
$$
for any $\{x_k\}_{k\in \mathbb{N}}\subseteq \R\p n$ such that $x_{k+1} \in x_k + \alpha_k F(x_k)$ for all $k \in \mathbb{N}$ where $\sum_{k=0}\p \infty \alpha_k\p p=\infty$. 
It is an asymptotic $p$-attractor if in addition it is asymptotically $p$-d-stable. 
\end{definition}

The above terminology is an allusion to the notion of attractor in continuous-time dynamics \cite[Definition 1.8]{han2017attractors}, although it differs in several regards: $A$ need not be closed nor invariant, $X$ is not some fixed neighborhood of $A$, nor can it be taken to be any bounded subset of $\R\p n$ (as in globally uniformly attracting sets \cite[Definition 8.2]{han2017attractors}), and $k_0$ can depend on $x_0$. Besides, the requirement on the minimal rate of decrease of the step sizes, absent from previous works, is crucial in our analysis. \cref{thm:stable_set} admits the following consequence. 
%It can interpreted as implicit regularization by a d-Lyapunov function.

\begin{theorem}
\label{thm:attractor}
Under \cref{assume:Ff} with $\min f = 0$, suppose
\begin{enumerate}[label=\rm{(\rm{\roman*})}]
    \item $g:\R^n\to\R$ is continuous with $\min g = 0$;
    \item $g$ is $p$-d-Lyapunov near every point in $\arg\min f\cap [g>0]$;
    \item $f+g$ is coercive.
\end{enumerate}
Then $\arg\min f+g$ is an asymptotic $p$-attractor.
\end{theorem}
\begin{proof}
Let $\epsilon,\ell>0$, $X \subseteq [f=0]$ be bounded, and $\widehat{g} = \sup_X g$. Without loss of generality, assume $\widehat g > 0$. Since $f+g$ is coercive and $f,g$ are continuous, after possibly reducing $\ell$, we have $\ell\in (0,\widehat g)$ and
$[f\leq \ell]\cap[g\leq \ell] \subseteq B_\epsilon([f=0]\cap[g=0])$.
Also, $[f=0]\cap[\ell/2\leq g\leq \widehat g +\ell]$ is bounded. Thus there exists $\rho>0$ such that $g$ is $p$-d-Lyapunov on $B_\rho([f=0]\cap[\ell/2\leq g\leq \widehat g+\ell])$. Again by continuity and coercivity, there exists $\ell'\in (0,\ell)$ such that
$$[f \leq\ell']\cap[\ell/2\leq g\leq \widehat g+\ell] \subseteq B_\rho([f=0]\cap[\ell/2\leq g\leq \widehat g+\ell]).$$
By continuity, there exists $\epsilon'>0$ such
$$B_{\epsilon'}([f=0]\cap[g\leq \widehat{g}]) \subseteq [f\leq \ell']\cap[g\leq \widehat{g}+\ell].$$
By \cref{thm:stable_set}, $[f=0]\cap[g\leq \widehat{g}]$ is d-stable. Recall that $X\subseteq[f=0]\cap[g\leq \widehat{g}]$. Thus there exists $\delta>0$ such that, after possibly reducing $\overline{\alpha}$, $$\forall \{\alpha_k\}_{k\in\N} \subseteq (0,\overline{\alpha}],~~~ x_0 \in B_\delta(X) ~~~ \Longrightarrow ~~~ x_k \in B_{\epsilon'}([f=0]\cap[g\leq \widehat{g}])$$
for any $\{x_k\}_{k\in \N}\subseteq \R\p n$ such that $x_{k+1}\in x_k +\alpha_k F(x_k)$ for all $k\in \N$. Hence $f(x_k) \leq \ell' \leq \ell$ and $g(x_k) \leq \widehat g + \ell$. If $g(x_k) \geq \ell/2$, then $g(x_{k+1}) \leq g(x_k) - \omega \alpha_k\p p$. Otherwise, by uniform continuity (as in the proof of \cref{thm:stable_set}), we have $g(x_{k+1}) = g(x_{k+1}) - g(x_k) + g(x_k) \leq \ell/2 + \ell/2 \leq \ell$. Since $\sum_{k=0}\p \infty\alpha_k\p p = \infty$, $g(x_k) \leq \ell$ eventually holds, at which point $x_k \in [f\leq \ell]\cap[g\leq\ell] \subseteq B_\epsilon([f=0]\cap[g=0])$. Thus $[f=0]\cap[g=0]=\arg\min f+g$ is an attractor. If $\alpha_k \geq \underline{\alpha} >0$ for some $\underline{\alpha}>0$ for all $k\in \N$, then $g(x_k) \leq \ell$ eventually holds after at most $k_0 = \lceil \widehat g/(\omega \underline{\alpha}\p p)\rceil$ iterations. If $\alpha_k\to 0$, then $\arg\min f+g$ is asymptotically $p$-d-stable by \cref{thm:stable_set}, and thus an asymptotic $p$-attractor.
\end{proof}

The results in \cref{sec:Theory} admit several extensions.

% \begin{remark}
% \label{rem:stable}
%      \cref{thm:stable_set} allows for multiple d-Lyapunov functions. If $g:\R^n\to\eR^m$, $\overline{g}\in \R^m$, and for all $i\in\llbracket 1,m \rrbracket $, $g_i:\R^n\to\eR$ is continuous near $[f\leq \overline{f}]\cap [g\leq \overline{g}]$ and every point in  $[f\leq \overline{f}]\cap [g_i>\overline{g}_i]$ near $[f\leq \overline{f}]\cap [g\leq \overline{g}]$ admits a neighborhood where $g_i$ is d-Lyapunov, then $[f\leq \overline{f}]\cap [g\leq \overline{g}]$ is d-stable. The proof is verbatim the same.
% \end{remark}

\begin{remark}[Continuity and coercivity]%[Extended real-valued d-Lyapunov functions]
\label{rem:continuity_coercivity}
    In \cref{thm:stable_set} (resp. \cref{thm:attractor}), it suffices if $g:\R^n\to\eR$ is continuous on $U\setminus \arg\min f+g$ where $U$ is a neighborhood of $\arg\min f+g$ (resp. if $g:\R^n\to\R_+$ is continuous on $U\setminus \arg\min f+g$ where $U$ is a neighborhood of $[f=0]\cap [g\leq \widetilde{g}]$ for any $\widetilde g \in (0,\sup_{[f=0]} g)$).
    In \cref{thm:attractor}, instead of coercivity of $f+g$, it suffices if $g([f=0]) \subseteq (-\infty,\sup_{[f=0]} g)$ and for all $\widetilde g \in (0,\sup_{[f=0]} g)$, there exists $\ell>0$ such that $[f\leq \ell]\cap[g\leq \widetilde{g}]$ is bounded.
    \end{remark}
\begin{remark}[Almost sure stability]
    \label{rem:almost_sure}
    \cref{def:stable_set,def:asym_stable_set,def:attractor} (resp. \cref{def:pdL}) can be loosened to hold for almost every initial point $x_0$ (resp. $x\in U$). \cref{thm:stable_point,thm:stable_set,thm:attractor} generalize in the obvious way.
    In \cref{thm:attractor}, if $g:\R\p n \to \eR$ is continuous on a full measure domain, and every point in $\arg\min f\cap [0<g<\infty]$ admits a neighborhood where $g$ is $p$-d-Lyapunov, then one obtains an almost sure asymptotic $p$-attractor.
\end{remark}
\begin{remark}[Multistep d-Lyapunov functions]
    \label{rem:multi}
    Suppose one requires $\sum_{k=0}\p \infty \alpha_{kq+i}\p  p=\infty$ for all $i\in\llbracket 0,q-1\rrbracket$ for some $q\in \N\p *$ in \cref{def:asym_stable_set,def:attractor} instead of $\sum_{k=0}\p \infty \alpha_k\p  p=\infty$. This requirement still fulfilled by $\alpha_k = \beta/(k+1)\p{1/\gamma}$ with $\beta>0$ and $\gamma\geq p$.
    Then one may relax the $p$-d-Lyapunov assumption in \cref{thm:stable_set,thm:attractor} to $(p,q)$-d-Lyapunov. The proof of the former can be applied to a single subsequence $\{x_{kq+i}\}_{k\in \N}$, yielding $\{x_{kq+i}\}_{k\in \N}\subseteq B_{\epsilon}([\widehat f = 0])$ or $d(x_{kq+i},[\widehat f = 0])\to 0$. Since $\alpha_k\to 0$ and $F$ is locally bounded, boundedness and convergence extend to the whole sequence. Or one may treat the $q$ subsequences simultaneously.
\end{remark}

\section{Application}
\label{sec:Application}

This section applies the theory of d-stability to the Bouligand subgradient method. We first recall some basic notions from variational analysis. Given $f:\mathbb{R}^n\rightarrow \overline{\mathbb{R}}$ and a point $\overline{x}\in\mathbb{R}^n$ where $f(\overline{x})$ is finite, the regular subdifferential, subdifferential, horizon subdifferential \cite[Definition 8.3]{rockafellar2009variational}, and Clarke subdifferential of $f$ at $\overline{x}$ \cite[Definition 4.1]{drusvyatskiy2015curves} are respectively given by
\begin{gather*}
    \widehat{\partial} f (\overline{x}) = \{ v \in \mathbb{R}^n : f(x) \geq   f(\overline{x}) + \langle v , x - \overline{x} \rangle + o(|x-\overline{x}|) ~\text{near}~ \overline{x} \}, \\
    \partial f(\overline{x}) = \{ v \in \mathbb{R}^n : \exists (x_k,v_k)\in \gph\hspace*{.5mm}\widehat{\partial} f: (x_k, f(x_k), v_k)\rightarrow(\overline{x}, f(\overline{x}), v) \}, \\[1mm]
    \partial^\infty f(\overline{x}) = \{ v \in \mathbb{R}^n : \exists (x_k,v_k)\in \gph\hspace*{.5mm}\widehat{\partial} f: \exists \tau_k \searrow 0: (x_k, f(x_k), \tau_kv_k)\rightarrow(\overline{x}, f(\overline{x}), v) \}, \\[2mm]
    \overline{\partial} f(\overline{x}) = \cco [\partial f(\overline{x}) + \partial^\infty f(\overline{x})].
\end{gather*}
Given $f:\R\p n\to \R$, the Bouligand subdifferential at $\overline{x}\in \R\p n$ is defined by
$$\overline{\nabla} f(\overline{x}) = \{ v \in \mathbb{R}^n : \exists x_k \xrightarrow[\Omega]{} \overline{x} ~\text{with}~ \nabla f(x_k) \rightarrow v\}$$
where $\Omega$ are the differentiable points of $f$. The letter $\Omega$ under the arrow means $x_k \in \Omega$ and $x_k \rightarrow \overline{x}$. 

\begin{definition}
    \label{def:BS}
    Given $f:\R\p n \to \R$, the Bouligand subgradient method is the Euler discretization of $-\overline\nabla f$, whose trajectories $\{x_k\}_{k\in \N}\subseteq \R\p n$ obey
$$\forall k \in \N, ~~~ x_{k+1} \in x_k - \alpha_k \overline{\nabla} f(x_k),$$
for some sequence $\{\alpha_k\}_{k\in \N} \subseteq (0,\infty)$.
\end{definition}

Contrary to the subdifferential $\partial f$ which satisfies Fermat's rule \cite[Theorem 10.1]{rockafellar2009variational}, if $\overline{x}$ is a local minimum of $f$, then we may have $0\notin \overline\nabla f(\overline{x})$. This happens if the norm of nearby gradients is bounded away from zero. The Bouligand subdifferential is thus the right tool for analyzing d-stability of local minima with this property, slightly stronger than sharpness. It prevents the dynamics from getting stuck at a local minimum.
%Another feature of the Bouligand subdifferential is that it allows one to check the sufficient condition for being a d-Lyapunov function only at global minima of interest, not their surroundings.
Of course, one could pander to avoidance results instead \cite[Theorem 7, Claim 3]{bolte2020mathematical}, but this seems like a needless detour.

The Bouligand subdifferential satisfies the key assumption in the theory of d-stability.

\begin{proposition}
\label{prop:Bouligand}
    If $f:\R\p n\to \R$ is locally Lipschitz semi-algebraic, then
    \cref{assume:Ff} thus holds with $F = -\overline{\nabla} f$ and $D = \overline{\partial} f$.
\end{proposition}
\begin{proof}
    Since $f$ is locally Lipschitz, $\overline{\nabla} f$ is locally bounded with nonempty values and has a closed graph by \cite[Theorem 9.61]{rockafellar2009variational}. By \cref{fact:usc_bounded}, it is upper semicontinuous with nonempty compact values. Since $f$ is locally Lipschitz semi-algebraic, it has finitely many $D$-critical values by the semi-algebraic Morse-Sard theorem \cite[Corollary 9]{bolte2007clarke} and $D$ is a conservative field for $f$ by \cite[Corollary 2, Proposition 2]{bolte2020conservative} (see also \cite[Corollary 5.4]{drusvyatskiy2015curves}). Again due to \cite[Theorem 9.61]{rockafellar2009variational}, $\co F(x) = -\co \overline{\nabla} f(x) = -\overline{\partial} f(x) = D(x)$ for all $x\in\R\p n$. Thus 
    $$\max_{u\in\co F(x)}\min_{v\in D(x)}\langle u , v \rangle = \max_{u\in D(x)}\min_{v\in D(x)}\langle -u , v \rangle = -\min_{u\in D(x)}\max_{v\in D(x)}\langle u , v \rangle \leq -\min_{u\in D(x)} |u|\p 2$$
    and one may choose $\kappa(x)=|x|\p 2$.
% \begin{equation*}
%     \forall v \in \overline{\partial} f(x(t)), ~~~ (f\circ x)'(t) = \langle v , x'(t) \rangle,
% \end{equation*}
% For any trajectory of $|x'(t)| = d(0,\overline{\partial}f(x(t)))$ almost everywhere \cite[Proposition 4.10]{drusvyatskiy2015curves} (see also \cite[Lemma 5.2]{davis2020stochastic}). This can be plugged into the above formula.
\end{proof}

Complementing \cref{eg:d-stable}, we make the connection between stability and d-stability for the Bouligand subdifferential.

\begin{proposition}
    Let $f:\R\p n \to \R$ be locally Lipschitz semi-algebraic and $\overline{x}\in \R\p n$. The following hold:
    \begin{enumerate}[label=\rm{(\roman{*})}]
        \item $\overline{x}$ is stable for $-\overline\partial f$ iff $\overline{x}$ is asymptotically stable for $-\overline\partial f$ iff $\overline{x}$ is a local minimum of $f$. \label{item:stable}
        \item If $\overline{x}$ is d-stable for $-\overline\nabla f$, then $\overline{x}$ is stable for $-\overline\partial f$. \label{item:d-stable}
    \end{enumerate}
\end{proposition}
\begin{proof}
    \ref{item:stable} This can be proved in a similar flavor to \cite[Theorem 3]{absil2006stable} (see also \cite[Lemma 1]{josz2025reachability}). As for \ref{item:d-stable}, if $\overline{x}$ is d-stable for $-\overline \nabla f$, then it is a local minimum of $f$. This can be shown using the same arguments as in \cite[Theorem 1]{josz2023lyapunov} modulo \cref{lemma:descent}. One then concludes using \ref{item:stable}.
\end{proof}

We conclude with two examples. The Bouligand sign of a scalar $x$ is defined by
\begin{equation*}
    \sgn(x)=\left\{\begin{array}{cl}
        x/|x| & \text{if}~ x\neq 0, \\
        \{-1,1\} & \text{if}~ x=0.
    \end{array}\right.
\end{equation*}
% The Lipschitz modulus of a function $f:\R^n\to\R$ at a point $\overline{x}\in \R\p n$ is defined by \cite[p. 354]{rockafellar2009variational}
% $$
%     \mathrm{lip}f(\overline{x}) = \limsup_{\scriptsize \begin{array}{c}x,y \rightarrow \overline{x}\\ x\neq y\end{array}} \frac{|f(x)-f(y)|}{|x-y|}.
% $$
\begin{example}
    \label{eg:ellipse}
    The d-stable global minima of $f(x,y) = |ax^2+by^2-1|$  are $\pm (0,1/\sqrt{b})$ if $a>b>0$ or $a<0<b$. They are almost sure asymptotic 2-attractors.
\end{example}
\begin{proof}
Compute
$$\overline{\nabla} f(x,y) = 2\sgn(ax\p 2 + by\p 2-1) \begin{pmatrix}
    ax \\ by
\end{pmatrix}.$$
%~~~\text{and}~~~ \lip f(x,y) = 2\sqrt{a\p 2 x\p 2 + b\p 2 y\p 2}.$$
We omit the trivial case where $ab=0$. 
%Since $\lip f$ is strictly minimized at $\pm(0,1/\sqrt{b})$ along the solution set, those are the flat minima by \cite[Corollary 3.18]{josz2025flat}.
Let $g:\R\p 2\to \eR$ be defined by
$$g(x,y) = \left\{
\begin{array}{cc}
    |x|\p b/|y|\p a & \text{if}~y\neq 0, \\
    \infty & \text{if}~y=0.
\end{array}
\right.
$$ 
Let $(\widetilde{x},\widetilde{y})\in[0<g<\infty]$.
For all $(x,y)$ near $(\widetilde{x},\widetilde{y})$, for all $s\in \sgn(ax^2+by^2-1)$, and for all sufficiently small $\alpha>0$, we have
\begin{align*}
    g(x\p+,y\p+) &= \frac{|x\p +|\p b}{|y\p +| \p a} = \frac{\left|x-\alpha sax\right|^b}{\left|y-\alpha sby\right|^a}=\frac{|x|^b}{|y|^a}\frac{\left(1-\alpha sa\right)^b}{\left(1-\alpha sb\right)^a} \\
    &=g(x,y)\exp\left[b\ln\left(1-\alpha sa\right)-a\ln\left(1-\alpha sb\right)\right]\\
    &=g(x,y)\exp\left(-s\p 2(ba^2-ab^2)\alpha^2/2 +O(\alpha^3)\right) \\
    &\leq g(x,y)\exp\left( -ab(a-b)\alpha\p 2/4\right) \\
    &\leq g(x,y)-g(x,y)ab(a-b)\alpha\p 2/4 \\
    &\leq g(x,y)-g(\widetilde{x},\widetilde{y})ab(a-b)\alpha\p 2/8
\end{align*}
since $\ln(1+t) = t-t\p 2/2+O(t\p 3)$.
Thus $g$ is $2$-d-Lyapunov near $(\widehat{x},\widehat{y})$. 

For pedagogical purposes, we successively apply the three main results in this paper, which yield increasingly strong conclusions. If $f(x,y)+g(x,y)=0$, then $f(x,y)=g(x,y)=0$ and $(x,y)=\pm(0,1/\sqrt{b})$. These are hence strict local minima of $f+g$. By \cref{thm:stable_point}, $\pm(0,1/\sqrt{b})$ are d-stable. By \cref{thm:stable_set}, they are asymptotically 2-stable.
If $a>b>0$, then $f$ is coercive, and $g$ is nonnegative, so $f+g$ is coercive. If $a<0<b$, then $f(x,y)+g(x,y)\geq g(x,y)=|x|^b|y|^{-a}\to\infty$ as $|x|,|y|\to\infty$, and $f(x,y)+g(x,y)\geq f(x,y)\to\infty$ as $|x|\to \infty$ and $y$ is bounded or vice-versa. Thus $f+g$ is coercive. By \cref{thm:attractor} and \cref{rem:almost_sure}, $\arg\min f+g = \{\pm(0,1/\sqrt{b})\}$ is an almost sure asymptotic 2-attractor.
\end{proof}

\begin{figure}[ht]
\centering
\begin{subfigure}{.49\textwidth}
  \centering
  \includegraphics[width=1\textwidth]{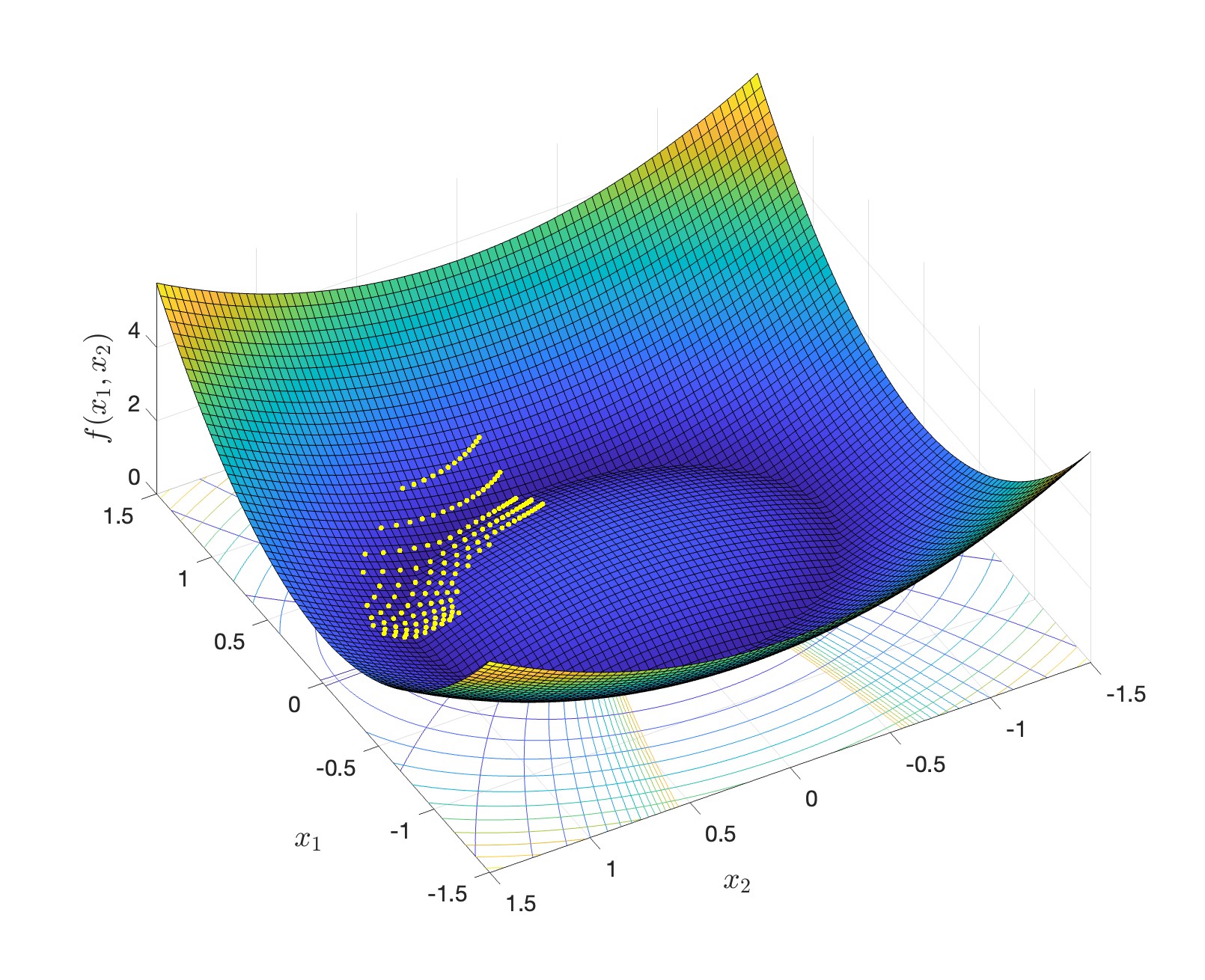}
  \caption{$f(x,y)=|2x^2+y^2-1|$}
  \label{fig:ellipse}
\end{subfigure}
\begin{subfigure}{.49\textwidth}
\centering
  \includegraphics[width=1\textwidth]{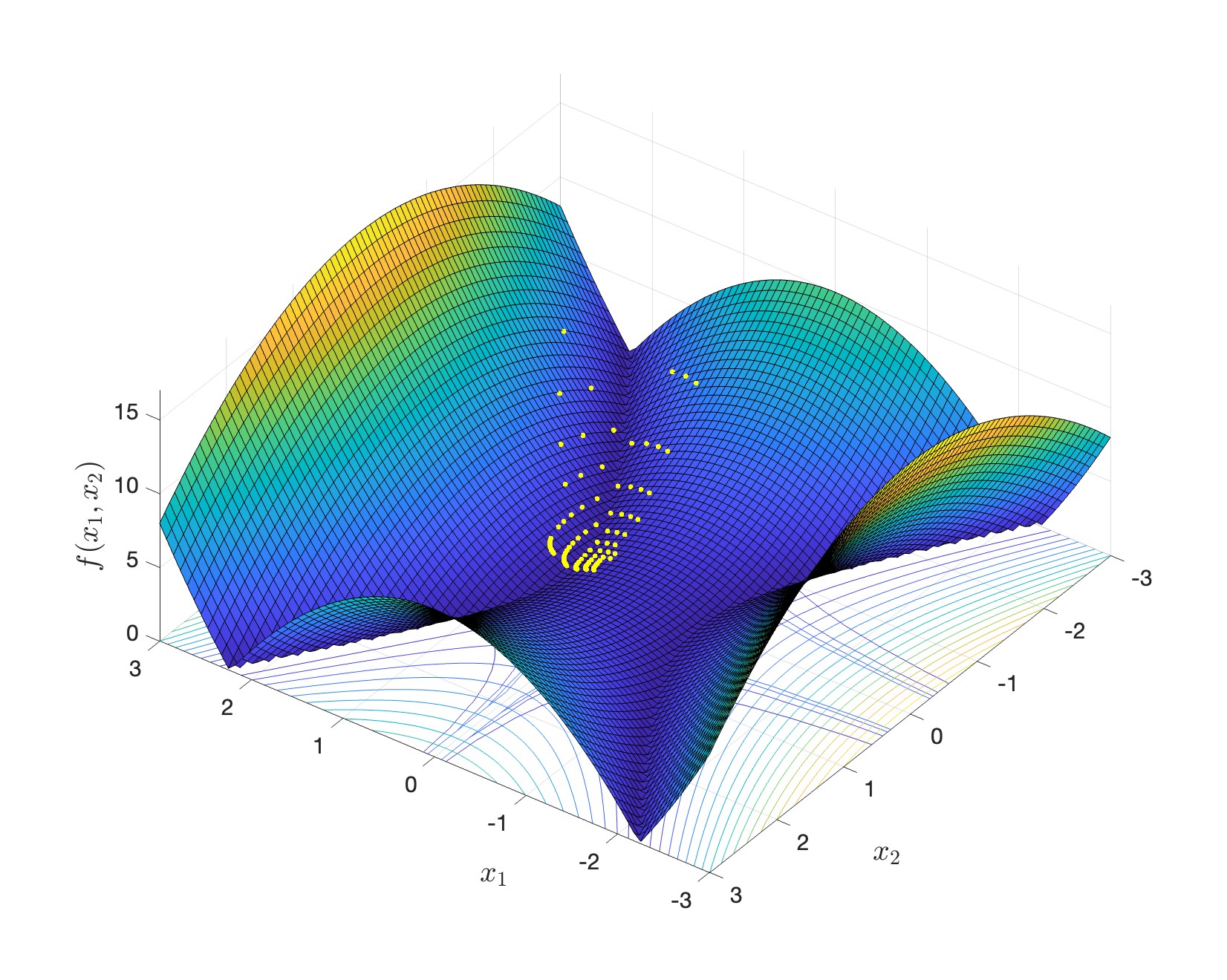}
  \caption{$f(x,y)=|2x^2-y^2-1|$}
  \label{fig:hyperbola}
\end{subfigure}
\caption{Bouligand subgradient method with constant step size.}
\end{figure}

% \begin{example}
%     \label{eg:mf1_01}
%     The d-stable global minima of $f(x) = |x_1x_3-a|+|x_2x_3-b|$ are $\pm (0,2^{1/4},2^{-1/4})$. They are asymptotic 2-attractors.
% \end{example}
\begin{example}
    \label{eg:mf1_ab_stable}
    The subset of global minima of $f(x) = |x_1x_3-a|+|x_2x_3-b|$ where $(a,b) \neq (0,0)$, $a\neq b$, given by
    $$
        X = \left\{ (at,bt,1/t) : \sqrt{ \frac{\sqrt{2}}{|a|+|b|} } \leqslant |t| \leqslant \sqrt{ \frac{\sqrt{2}}{||a|-|b||}} \right\}
    $$
    is an asymptotic 2-attractor.
\end{example}
    \begin{proof} 
    By \cite[Theorems 8.9 and 10.6]{rockafellar2009variational}, %(see also \cite[Fact 3.25]{josz2025flat}), 
    \begin{equation*}
       \partial f(x) = \left\{ \begin{pmatrix}
            \lambda_1 x_3 \\
            \lambda_2 x_3 \\
            \lambda_1 x_1 + \lambda_2 x_2
        \end{pmatrix}
        ~,\hspace{3mm} \lambda_1\in\sign(x_1x_3-a),~ \lambda_2 \in \sign(x_2x_3-b)
        \right\}.
    \end{equation*}
    Let $x_t=(a,bt,1/t)$ with $t\neq 0$ be an arbitrary global minimum of $f$ and compute
    $$
        \overline{\nabla} f(x_t) = \left\{ 
        \pm \begin{pmatrix}
            1/t \\
            1/t \\
            at + bt
        \end{pmatrix}
        ~,~
        \pm \begin{pmatrix}
            \hphantom{-}1/t \\
            -1/t \\
            at - bt
        \end{pmatrix}
        \right\}.
    $$
Consider the function $g:\R\p 3\to\R$ defined by
    $$g(x) = \max\left\{0,\left[x_1^2+x_2^2 - x_3^2-(c+d)/2\right]\p2-(c-d)\p 2/4\right\}$$
    where 
    $$c=\sqrt{2}\frac{a\p 2+b\p 2}{|a|+|b|}-\frac{|a|+|b|}{\sqrt{2}} ~~~\text{and}~~~ d=\sqrt{2}\frac{a\p 2+b\p 2}{|a|-|b|}-\frac{|a|-|b|}{\sqrt{2}}.$$
    One can verify that $\arg\min f+g = X$.
    Since
    $$\forall v \in \overline{\nabla} f(x_t),~~~ \langle \nabla\p 2 g(x_t)v,v\rangle  = [(a\p 2+b\p 2)t^2 - 1/t^2 -(c+d)/2][2/t^2 - (a \pm b)\p 2t^2] < 0$$
    whenever $g(x_t)>0$, $g$ is 2-d-Lyapunov near such $x_t$ by \cref{prop:sufficient_dL2}. As $f+g$ is coercive, $X$ is an asymptotic 2-attractor by \cref{thm:attractor}. In particular, if $(a,b)=(0,1)$, then $X=\{\pm (0,2^{1/4},2^{-1/4})\}$. 
   %  In that case, in contrast to the Bouligand subdifferential $-\overline{\nabla} f$, the subgradient $-\partial f$ has no d-stable points. Given $\beta,\gamma\in \R$ and $\alpha\in(0,1)$, let $\{x\p k\}_{k\in\N}$ be defined by 
   % $$\forall k\in \N,~~~  x\p {k+1} = x\p k - \alpha v\p k, ~~~ x\p 0  = (0,2^{1/4}+\beta,2^{-1/4}+\gamma),$$
   % where $v\p k \in \sgn(x_2\p kx_3\p k-1)(0, x_3\p k, x_2\p k) \subseteq \partial f(x\p k)$. We have $C(x\p {k+1}) = C(x\p k - \alpha v\p k) = C(x\p k) + \alpha\p 2 C(v\p k) = (1-\alpha\p 2)C(x\p k)$, i.e., $C(x\p k)\to 0$. Recall that $\overline{x} = (0,2^{1/4},2^{-1/4})$ and $C(\overline{x}) =\sqrt{2}/2$, so $\overline{x}$ is d-unstable. Likewise, $-\overline{x}$ is d-unstable. On the other hand, it is easy to see that $\pm\overline{x}$ are almost surely d-stable for $-\partial f$.
\end{proof}

A host of other examples will be given in \cite{josz2026implicit} with normalized gradient descent.

\section{Appendix}

\begin{proof}[Proof of \cref{thm:lyapunov}]
    Without loss of generality, $f(\overline{x})=0$.
    Let $\epsilon>0$ be sufficiently small so that $f(x)>0$ for all $x \in B_{3\epsilon}(\overline{x})$. Since $f$ is continuous, $\ell = \min \{f(x):\epsilon\leq |x-\overline{x}|\leq 2\epsilon\}>0$ and there exists $\delta>0$ such that $B_\delta(\overline{x})\subseteq [f\leq \ell/2]\cap B_{2\epsilon}(\overline{x}) \subseteq B_{\epsilon}(\overline{x})$. Suppose $x:[0,T)\rightarrow \mathbb{R}^n$ is an absolutely continuous function with $T\in(0,\infty]$ such that 
    $$\forae t \in (0,T), ~~~ x'(t) \in F(x(t)), ~~~ x(0) \in B_\delta(\overline{x}).$$ 
    Since $f$ is a potential for $D$,
    $$\forae t \in (0,T), ~~~ (f\circ x)'(t) = \min_{v \in D(x(t))} \langle x'(t) , v \rangle \leq \sup_{u\in F(x(t))} \min_{v \in D(x(t))} \langle u , v \rangle \leq 0.$$ 
    As $f$ is locally Lipschitz and $x(\cdot)$ is absolutely continuous, $f\circ x$ is absolutely continuous. Thus $$\forall t\in (0,T),~~~ f(x(t))-f(x(0)) = \int_0\p t (f\circ x)'(s)ds \leq 0,$$ and so $f(x(t)) \leq f(x(0))\leq \ell/2$. Recall that $|x(0)-\overline{x}|\leq \delta$. If there exists $t\in (0,T)$ such that $|x(t)-\overline{x}|>\epsilon$, then by the intermediate value theorem there exists $t_0\in(0,t]$ such that $\epsilon \leq |x(t_0)-\overline{x}|\leq 2\epsilon$, in which case $f(x(t_0))\geq \ell>\ell/2$, a contradiction.

    Assume \ref{item:negative_definite} holds and that $x(\cdot)$ is a maximal solution. Since it is bounded, it must be globally defined. We have
    $$\forae t \in (0,\infty), ~~~ (f\circ x)'(t) \leq \sup_{u\in F(x(t))} \min_{v \in D(x(t))} \langle u , v \rangle \leq -\inf_{w\in D(x(t))} \kappa(w).$$
    After possibly reducing $\epsilon>0$, $[0<f\leq \ell]$ is devoid of $D$-critical values.  
    Since $f \circ x$ is decreasing and nonnegative, it has a limit $f\p *$. Suppose $f\p *>0$ and let 
    $$\zeta = \inf\{ \kappa(w) : w \in D(x),~ |x-\overline{x}|\leq \epsilon,~ f\p * \leq f(x) \leq \ell \}.$$
    If $\zeta = 0$, then there exists $(x_k,w_k)\in \gph D$ such that $\kappa(w_k) \to 0$, $|x_k-\overline{x}|\leq \epsilon$, and $f\p * \leq f(x_k) \leq \ell$. Since $D$ is locally bounded, $(x_k,w_k)$ is bounded and thus has a limit point $(\widehat x , \widehat w)$. As $f$ is continuous, $D$ has a closed graph, and $\kappa\in\cP$, passing to limit yields $\kappa(\widehat w)=0$, $|\widehat x-\overline{x}|\leq \epsilon$, $0<f\p * \leq f(\widehat x) \leq \ell$, and $(\widehat x , \widehat w) \in \gph D$. This yields the contradiction $\widehat w = 0 \in D(\widehat x)$. Thus $\zeta >0$ and 
    $$\forall t\in (0,\infty),~~~ f(x(t))-f(x(0)) \leq - \int_0\p t \inf_{w\in D(x(s))} \kappa(w) ds \leq -\zeta t \to -\infty,$$
    another contradiction. Thus $f\p *=0$, that is to say, $f(x(t))\to 0$. As above, let $\ell_k = \min \{f(x)/2:1/k\leq |x-\overline{x}|\leq \epsilon\}>0$ for all $k\in \N\p *$. Hence, for all $k\in \N\p *$ and sufficiently large $t >0$, we have $x(t) \in [f\leq \ell_k]\cap B_{\epsilon}(\overline{x}) \subseteq B_{1/k}(\overline{x})$. We conclude that $x(t)\to \overline{x}$.
\end{proof}

\begin{proof}[Proof of \cref{fact:bounded}]
    Let $U_x$ be a neighborhood of $x\in \overline{X}$ such that $F(U_x)$ is bounded. By compactness, the open cover $\{U_x:x\in \overline{X}\}$ admits a finite subcover $U_{x_1},\hdots,U_{x_p}$. Thus $F(X)\subseteq F(\overline{X})\subseteq F(U_{x_1})\cup\cdots\cup F(U_{x_p})$ is bounded. 
\end{proof}

\begin{proof}[Proof of \cref{fact:usc_bounded}]
    ($\Longrightarrow$) Let $\overline{x}\in \R\p n$. Since $B_1(F(\overline{x}))$ is a neighborhood of the compact set $F(\overline{x})$, by upper semicontinuity there exists a neighborhood $U$ of $\overline{x}$ such that $F(U)\subseteq V$. In particular, $F(U)$ is bounded. 
    Suppose $\gph F \ni (x_k,y_k)\to(\overline{x},\overline{y})\notin \gph F$. As $F(\overline{x})\p c$ is open, there exists $r>0$ such that $\overline{B}_r(\overline{y}) \subseteq F(\overline{x})\p c$. Since $V = [\overline{B}_r(\overline{y})]\p c$ is open and contains $F(\overline{x})$, by upper semicontinuity there exists a neighborhood $U$ of $\overline{x}$ in $\R\p n$ such that $F(U) \subseteq V$. Since $x_k \to \overline{x}$, eventually $x_k \in U$, and so $y_k \in F(x_k) \subseteq V$, i.e., $y_k \notin B_r(\overline{y})$, a contradiction. 

    ($\Longleftarrow$) Let $\overline{x} \in \R\p n$, $F(\overline{x}) = \gph F \cap (\{\overline{x}\}\times \R\p m)$ is closed as $\gph F$ is closed. Since $F(\overline{x})$ is also bounded, it is compact. Let $V$ be a neighborhood of $F(\overline{x})$ in $\R\p m$. Assume that $F(U) \cap (\R\p m \setminus V) \neq \emptyset$ for any neighborhood of $\overline{x}$ in $\R\p n$. Then there exists a sequence $(x_k,y_k)\in \gph F$ such that $x_k \to \overline{x}$ and $y_k \notin V$. Since $F$ is locally bounded, $y_k$ has a limit point $\overline{y}\notin V$. But $\gph F$ is closed, so $(\overline{x},\overline{y}) \in \gph F$, i.e., $\overline{y}\in F(\overline{x})\subseteq V$, a contradiction.
\end{proof}

\begin{proof}[Proof of \cref{fact:equilibrium}]
    Suppose $v = P_{F(\overline{x})}(0)\neq 0$. Then $\langle 0-v,w-v\rangle \leq 0$ for all $w\in F(\overline{x})$, namely, $\langle v,w\rangle \geq |v|\p 2>0$. Since $\gph F$ is closed by \cref{fact:usc_bounded}, there exists $\epsilon>0$ such that $\langle w,v\rangle\geq |v|\p 2/2$ for all $x\in B_\epsilon(\overline{x})$ and $w\in F(x)$. Then for any maximal solution $x:[0,T)\to\R\p n$ to $\dot{x}\in F(x)$ such that $x(t)\in B_\epsilon(\overline{x})$ for all $t\in [0,T)$, we have $T=\infty$ and $\langle x'(t),v\rangle\geq |v|\p 2/2$ for almost every $t\in (0,\infty)$. Thus $\langle x(t),v\rangle \geq |v|\p 2t/2 \to \infty$, a contradiction.
\end{proof}

\begin{proof}[Proof of \cref{fact:neighborhood}]
    $U$ is a neighborhood of each point $x\in X$, so there exists $r_x>0$ such that $B_{r_x}(x)\subseteq U$. The open cover $\{B_{r_x}(x):x\in X\}$ of $X$ admits a finite subcover since $X$ is compact, yielding $r>0$ such that $B_r(X)\subseteq U$.
\end{proof}

\begin{proof}[Proof of \cref{fact:co_usc}]
    By Carathéodory’s theorem, $\co F$ has compact convex values. Let $V$ be a neighborhood of $\co F(\overline{x})$ for some $\overline{x}\in\R\p n$. By \cref{fact:neighborhood}, there exists $r>0$ such that $W = B_r(\co F(\overline{x})) \subseteq V$. Since $W$ is a neighborhood of $F(\overline{x})$, there exists a neighborhood $U$ of $\overline{x}$ such that $F(U)\subseteq W$. As $W = \co F(\overline{x})+B_r(0)$ is convex, $\co F(U)\subseteq W \subseteq V$.
\end{proof}

\begin{proof}[Proof of \cref{lemma:tracking}] 
    Let $\{\alpha\p i\}_{i \in \mathbb{N}} \subseteq (0,\min\{\widehat\alpha,T/2\})$ be such that $\alpha\p i \to 0$. To each $i\in \N$, we attribute sequences $\{\alpha_k\p i\}_{k\in \N} \subseteq (0,\alpha\p i]$ and $\{x_k^i\}_{k\in\mathbb{N}}\subseteq\R\p n$ such that $x_0\p i \in X$ and $x_{k+1}\p i \in x_k\p i + \alpha_k\p i F(x_k\p i)$ for all $k\in \N$. Also, let $\{t_k\p i\}_{k\in\N}$ be such that 
    $$t_0\p i = 0 ~~~\text{and}~~~ \forall k\in\N\p *, ~ t_k\p i = \alpha_0\p i + \cdots + \alpha_{k-1}\p i.$$  
    By assumption, if $t_k\p i \leq T$, then $x_k\p i \in B_r(X)$. Consider the linear interpolation $y\p i:[0,T]\rightarrow\mathbb{R}^n$ defined by $$y\p i(t) = x_k\p i + (t-t_k\p i)(x_{k+1}\p i-x_k\p i)/\alpha_k\p i$$ for all $t\in [t_k\p i , \min\{t_{k+1}\p i,T\}]$ and $k\in \N$ such that $t_k\p i \leq T$. Since $F$ is upper semicontinuous with compact values, by \cref{fact:bounded} and \cref{fact:usc_bounded} we have $L = \sup \{ |u| : u\in F(B_r(X)) \}<\infty$. Naturally, $d(y\p i(t),\co X)\leq r+\alpha\p iL\leq r+TL/2$ for all $t\in [0,T]$, where the second term accounts for the last iterate, possibly outside $B_r(X)$. Observe that $(y\p i)'(t) = (x_{k+1}^i-x_k^i)/\alpha_k\p i \in F(x_k^i)$ for all $t\in (t_k\p i , \min\{t_{k+1}\p i,T\})$ and $k\in\N$ such that $t_k\p i \leq T$. Hence $|(y\p i)'(t)| \leq L$ for almost every $t\in (0,T)$. By successively applying the Arzel\`a-Ascoli and the Banach-Alaoglu theorems (see \cite[Theorem 4 p. 13]{aubin1984differential}), there exists a subsequence (again denoted $\{\alpha\p i\}_{i\in \mathbb{N}}$) and an absolutely continuous function $x:[0,T]\rightarrow \mathbb{R}^n$ such that $y\p i(\cdot)$ converges uniformly to $x(\cdot)$ and $(y\p i)'(\cdot)$ converges weakly to $x'(\cdot)$ in $L^1([0,T],\mathbb{R}^n)$. Furthermore, for all $t\in (t_k\p i , \min\{t_{k+1}\p i,T\})$ and $k \in \N$ such that $t_k\p i \leq T$,
\begin{align*}
    (y\p i(t),(y\p i)'(t)) 
    & = \left(x_k^i + (t-t_k\p i)\frac{x_{k+1}^i-x_k^i}{\alpha_k\p i},\frac{x_{k+1}^i-x_k^i}{\alpha_k\p i}\right) \\
    & \in \left(\{x_k^i\} + (t-t_k\p i)F(x_k^i)\right) \times F(x_k^i)  \\
    & = \{x_k^i\} \times F(x_k^i) + (t-t_k\p i)F(x_k^i) \times \{0\}  \\
    & \subseteq \gph F + B_{L\alpha\p i}(0)\times \{0\}.
\end{align*}
Since $F$ is upper semicontinuous with compact values, $\co F$ is upper semicontinuous with closed convex values by \cref{fact:co_usc}. According to \cite[Convergence Theorem p. 60]{aubin1984differential}, it follows that $x'(t) \in \co F(x(t))$ for almost every $t\in (0,T)$. The sequence of initial points $\{x^i(0)\}_{i\in\mathbb{N}}$ lies in $X$, hence its limit $x(0)$ lies in $\overline{X}$. As a result, $x(\cdot)$ is a solution to the differential inclusion \eqref{subeq:DI}. 

To sum up, for every positive sequence $\{\alpha\p i\}_{i \in \mathbb{N}}$ converging to zero, there exists a subsequence for which the corresponding linear interpolations uniformly converge towards a solution of the differential inclusion \eqref{subeq:DI}. We now argue by contradiction and assume that there exists $\eta>0$ such that for all $\overline{\alpha}>0$, there exist $\{\alpha_k\}_{k\in\N} \subseteq (0,\overline{\alpha}]$ and $\{x_k\}_{k \in \mathbb{N}}\subseteq \R\p n$ satisfying \eqref{eq:Euler} such that, for any solution $x(\cdot)$ to the differential inclusion \eqref{subeq:DI}, it holds that $|y(t) - x(t) | > \eta$ for some $t \in [0,T]$. We can then generate a positive sequence $\{\alpha\p i\}_{i \in \mathbb{N}}$ converging to zero such that, for any solution $x(\cdot)$ to the differential inclusion \eqref{subeq:DI}, it holds that $|y\p i(t) - x(t)| > \eta$ for some $t \in [0,T]$. Since there exists a subsequence $\{\alpha\p{\varphi(i)}\}_{i \in \mathbb{N}}$ such that $y^{\varphi(i)}$ uniformly converges to a solution to the differential inclusion \eqref{subeq:DI}, we obtain a contradiction.
\end{proof}

\section*{Acknowledgments}
I wish to thank Wenqing Ouyang for his valuable feedback.
% We thank the reviewers and the associate editor for their valuable feedback.

\bibliographystyle{abbrv}    
\bibliography{references}

\begin{thebibliography}{10}

\bibitem{absil2006stable}
P.-A. Absil and K.~Kurdyka.
\newblock On the stable equilibrium points of gradient systems.
\newblock {\em Systems \& control letters}, 55(7):573--577, 2006.

\bibitem{andriushchenko2023modern}
M.~Andriushchenko, F.~Croce, M.~M{\"u}ller, M.~Hein, and N.~Flammarion.
\newblock A modern look at the relationship between sharpness and generalization.
\newblock {\em ICML}, 2023.

\bibitem{aubin1984differential}
J.-P. Aubin and A.~Cellina.
\newblock {\em Differential inclusions: set-valued maps and viability theory}, volume 264.
\newblock Springer-Verlag, 1984.

\bibitem{bochnak2013real}
J.~Bochnak, M.~Coste, and M.-F. Roy.
\newblock {\em Real algebraic geometry}, volume~36.
\newblock Springer Science \& Business Media, 2013.

\bibitem{bolte2007clarke}
J.~Bolte, A.~Daniilidis, A.~Lewis, and M.~Shiota.
\newblock Clarke subgradients of stratifiable functions.
\newblock {\em SIAM Journal on Optimization}, 18(2):556--572, 2007.

\bibitem{bolte2025inexact}
J.~Bolte, T.~Le, E.~Moulines, and E.~Pauwels.
\newblock Inexact subgradient methods for semialgebraic functions: J. bolte et al.
\newblock {\em Mathematical Programming}, pages 1--27, 2025.

\bibitem{bolte2020conservative}
J.~Bolte and E.~Pauwels.
\newblock Conservative set valued fields, automatic differentiation, stochastic gradient methods and deep learning.
\newblock {\em Mathematical Programming}, pages 1--33, 2020.

\bibitem{bolte2020mathematical}
J.~Bolte and E.~Pauwels.
\newblock A mathematical model for automatic differentiation in machine learning.
\newblock {\em Advances in Neural Information Processing Systems}, 33:10809--10819, 2020.

\bibitem{braun2018complete}
P.~Braun, L.~Gr{\"u}ne, and C.~M. Kellett.
\newblock Complete instability of differential inclusions using lyapunov methods.
\newblock In {\em 2018 IEEE Conference on Decision and Control (CDC)}, pages 718--724. IEEE, 2018.

\bibitem{braun2021stability}
P.~Braun, L.~Gr{\"u}ne, and C.~M. Kellett.
\newblock {\em (In-) stability of differential inclusions: notions, equivalences, and Lyapunov-like characterizations}.
\newblock Springer Nature, 2021.

\bibitem{clarke1998asymptotic}
F.~H. Clarke, Y.~S. Ledyaev, and R.~J. Stern.
\newblock Asymptotic stability and smooth lyapunov functions.
\newblock {\em Journal of differential Equations}, 149(1):69--114, 1998.

\bibitem{davy1972properties}
J.~L. Davy.
\newblock Properties of the solution set of a generalized differential equation.
\newblock {\em Bulletin of the Australian Mathematical Society}, 6(3):379--398, 1972.

\bibitem{donchev1998stability}
T.~Donchev and E.~Farkhi.
\newblock Stability and euler approximation of one-sided lipschitz differential inclusions.
\newblock {\em SIAM journal on control and optimization}, 36(2):780--796, 1998.

\bibitem{dontchev1992difference}
A.~Dontchev and F.~Lempio.
\newblock Difference methods for differential inclusions: A survey.
\newblock {\em SIAM review}, 34(2):263--294, 1992.

\bibitem{dontchev1989error}
A.~L. Dontchev and E.~M. Farkhi.
\newblock Error estimates for discretized differential inclusions.
\newblock {\em Computing}, 41(4):349--358, 1989.

\bibitem{drusvyatskiy2015curves}
D.~Drusvyatskiy, A.~D. Ioffe, and A.~S. Lewis.
\newblock Curves of descent.
\newblock {\em SIAM Journal on Control and Optimization}, 53(1):114--138, 2015.

\bibitem{grune2016nonlinear}
L.~Gr{\"u}ne and J.~Pannek.
\newblock Nonlinear model predictive control.
\newblock In {\em Nonlinear model predictive control: Theory and algorithms}, pages 45--69. Springer, 2016.

\bibitem{han2017attractors}
X.~Han and P.~Kloeden.
\newblock {\em Attractors under discretisation}.
\newblock Springer, 2017.

\bibitem{hochreiter1997flat}
S.~Hochreiter and J.~Schmidhuber.
\newblock Flat minima.
\newblock {\em Neural computation}, 9(1):1--42, 1997.

\bibitem{josz2026implicit}
C.~Josz.
\newblock Implicit regularization of normalized gradient descent.
\newblock {\em arXiv}, 2026.

\bibitem{josz2023globalstability}
C.~Josz and L.~Lai.
\newblock Global stability of first-order methods for coercive tame functions.
\newblock {\em Mathematical Programming}, pages 1--26, 2023.

\bibitem{josz2023lyapunov}
C.~Josz and L.~Lai.
\newblock Lyapunov stability of the subgradient method with constant step size.
\newblock {\em Mathematical Programming}, pages 1--10, 2023.

\bibitem{josz2025reachability}
C.~Josz and W.~Ouyang.
\newblock Reachability of gradient dynamics.
\newblock {\em submitted}, 2025.

\bibitem{kellett2005robustness}
C.~M. Kellett and A.~R. Teel.
\newblock On the robustness of $\backslash$mathcalkl-stability for difference inclusions: Smooth discrete-time lyapunov functions.
\newblock {\em SIAM Journal on Control and Optimization}, 44(3):777--800, 2005.

\bibitem{keskar2017large}
N.~S. Keskar, D.~Mudigere, J.~Nocedal, M.~Smelyanskiy, and P.~T.~P. Tang.
\newblock On large-batch training for deep learning: Generalization gap and sharp minima.
\newblock {\em ICLR}, 2017.

\bibitem{kloeden1986stable}
P.~E. Kloeden and J.~Lorenz.
\newblock Stable attracting sets in dynamical systems and in their one-step discretizations.
\newblock {\em SIAM Journal on Numerical Analysis}, 23(5):986--995, 1986.

\bibitem{kurdyka1998gradients}
K.~Kurdyka.
\newblock On gradients of functions definable in o-minimal structures.
\newblock In {\em Annales de l'institut Fourier}, volume~48, pages 769--783, 1998.

\bibitem{kurzweil1955reversibility}
J.~Kurzweil.
\newblock On the reversibility of the first theorem of lyapunov concerning the stability of motion.
\newblock {\em Czechoslovak Math. J}, 5(80):382--398, 1955.

\bibitem{lakshmikantham2002theory}
V.~Lakshmikantham and V.~Trigiante.
\newblock {\em Theory of difference equations numerical methods and applications}.
\newblock CRC Press, 2002.

\bibitem{liapounoff1907probleme}
A.~Liapounoff.
\newblock Probl{\`e}me g{\'e}n{\'e}ral de la stabilit{\'e} du mouvement.
\newblock In {\em Annales de la Facult{\'e} des sciences de Toulouse: Math{\'e}matiques}, volume~9, pages 203--474, 1907.

\bibitem{massera1949liapounoff}
J.~L. Massera.
\newblock On liapounoff's conditions of stability.
\newblock {\em Annals of Mathematics}, 50(3):705--721, 1949.

\bibitem{nesterov2003introductory}
Y.~Nesterov.
\newblock {\em Introductory lectures on convex optimization: A basic course}, volume~87.
\newblock Springer Science \& Business Media, 2003.

\bibitem{rockafellar2009variational}
R.~T. Rockafellar and R.~J.-B. Wets.
\newblock {\em Variational analysis}, volume 317.
\newblock Springer Science \& Business Media, 2009.

\bibitem{sandberg2009convergence}
M.~Sandberg.
\newblock Convergence of the forward euler method for nonconvex differential inclusions.
\newblock {\em SIAM journal on numerical analysis}, 47(1):308--320, 2009.

\bibitem{seah1982existence}
S.~W. Seah.
\newblock Existence of solutions and asymptotic equilibrium of multivalued differential systems.
\newblock {\em Journal of Mathematical Analysis and Applications}, 89(2):648--663, 1982.

\bibitem{teel2000smooth}
A.~R. Teel and L.~Praly.
\newblock A smooth lyapunov function from a class-estimate involving two positive semidefinite functions.
\newblock {\em ESAIM: Control, Optimisation and Calculus of Variations}, 5:313--367, 2000.

\bibitem{wu2018sgd}
L.~Wu, C.~Ma, et~al.
\newblock How sgd selects the global minima in over-parameterized learning: A dynamical stability perspective.
\newblock {\em Advances in Neural Information Processing Systems}, 31, 2018.

\bibitem{xiao2023sgd}
N.~Xiao, X.~Hu, and K.-C. Toh.
\newblock Stochastic subgradient methods with guaranteed global stability in nonsmooth nonconvex optimization.
\newblock {\em arXiv e-prints}, pages arXiv--2307, 2024.

\bibitem{yoshizawa1966stability}
T.~Yoshizawa.
\newblock Stability theory by liapunov's second method.
\newblock {\em Publications of the Mathematical Society of Japan}, 1966.

\end{thebibliography}
\end{document}